\theoremstyle{plain}
\newtheorem*{theorem*}{Theorem}
\newtheorem{prop}{Proposition}
\newtheorem{lemma}[prop]{Lemma}
\theoremstyle{remark}
\newtheorem*{remark*}{Remark}
\numberwithin{prop}{section}
\numberwithin{equation}{section}
\newcommand\ab{\mathbf{a}}
\newcommand\xb{\mathbf{x}}
\newcommand\yb{\mathbf{y}}
\newcommand\bb{\mathbf{b}}
\newcommand\gb{\mathbf{g}}
\newcommand\oneb{\mathbf{1}}
\newcommand\ddd{\,\mathrm{d}}
\newcommand\PP{\mathbb{P}}
\newcommand\QQ{\mathbb{Q}}
\newcommand\RR{\mathbb{R}}
\newcommand\ZZ{\mathbb{Z}}
\newcommand\ZZnz{\ZZ_{\ne 0}}
\newcommand\sums[1]{\sum_{\substack{#1}}}
\newcommand\db{{\mathbf{d}}}
\newcommand\eb{{\mathbf{e}}}
\newcommand{\Ps}{\mathscr{P}}
\newcommand{\Gs}{\mathscr{G}}
\newcommand\Munder{\underline{M}}
\newcommand\Mover{\overline{M}}
\newcommand\Aover{\overline{A}}
\DeclareMathOperator{\vol}{vol}
\newcommand{\Mod}[1]{\ (\mathrm{mod}\ #1)}
\newcommand\congr[3]{#1 \equiv #2\Mod{#3}}
   \def\MR#1{}
\begin{document}

\title[Points of bounded height on quintic del Pezzo surfaces]
{Points of bounded height on quintic del Pezzo surfaces over the rational numbers}

\author{Christian Bernert}

\address{Institut f\"ur Algebra, Zahlentheorie und Diskrete Mathematik, Leibniz Universit\"at Hannover, Welfengarten 1, 30167 Hannover, Germany}

\email{bernert@math.uni-hannover.de}

\author{Ulrich Derenthal} 

\address{Institut f\"ur Algebra, Zahlentheorie und Diskrete Mathematik, Leibniz Universit\"at Hannover, Welfengarten 1, 30167 Hannover, Germany}

\email{derenthal@math.uni-hannover.de}

\date{May 9, 2025}

\keywords{Manin's conjecture, rational points, del Pezzo surface, universal torsor}
\subjclass[2020]{11G35 (11D45, 14G05)}

\setcounter{tocdepth}{1}

\maketitle

{\centering\footnotesize To Yuri Tschinkel on his 60th birthday.\par}

\begin{abstract}
  We give a relatively short and elementary proof of Manin's conjecture for split smooth quintic del Pezzo surfaces over the rational numbers.
\end{abstract}

\tableofcontents

\section{Introduction}

In his Ph.D. thesis \cite{T92}, Yuri Tschinkel studied rational points of bounded height on del Pezzo surfaces and surfaces of intermediate type. In particular, he considered the split quintic del Pezzo surface over $\QQ$, which is obtained as the blow-up $\pi : X \to \PP^2_\QQ$ of the projective plane in the four points
\begin{equation*}
    p_1 = (1:0:0),\quad p_2=(0:1:0),\quad p_3=(0:0:1),\quad p_4=(1:1:1).
\end{equation*}
It contains ten lines, namely the four exceptional divisors $A_i$ and the six strict transforms $A_{ij}$ of the lines through two of these four points. Since rational points accumulate on these ten lines, one is interested in the rational points in their complement $U$ in $X$.

To define an anticanonical height function, we consider the linear system of cubic polynomials vanishing in the blown-up points $p_1,p_2,p_3,p_4$. A basis is given by each of the sets of six polynomials (for pairwise distinct $i,j,k \in \{1,2,3,4\}$)
\begin{equation*}
    \Ps_1 := \{Y_iY_j(Y_i-Y_k)\}, \qquad \Ps_2 :=\{Y_j(Y_i-Y_j)(Y_i-Y_k)\}.
\end{equation*}
Each of these defines an anticanonical embedding $X \hookrightarrow \PP^5_\QQ$. To obtain the most symmetric height function in Cox coordinates, we will work with their union $\Ps := \Ps_1 \cup \Ps_2$, and hence we define our height function as
\begin{equation}\label{eq:height}
    H(\pi^{-1}(y_1:y_2:y_3)) := \frac{\max_{P \in \Ps}|P(y_1,y_2,y_3)|}{\gcd_{P \in \Ps}P(y_1,y_2,y_3)}
\end{equation}
for any rational point $\xb = \pi^{-1}(y_1:y_2:y_3) \in U(\QQ)$ outside the lines.

\begin{theorem*}
    We have
    \begin{equation*}
        N_{U,H}(B) = c_{X,H} B(\log B)^4 + O(B(\log B)^3\log \log B),
    \end{equation*}
    for $B \ge 3$, where
    \begin{equation*}
        c_{X,H} = \alpha(X) \cdot \omega_\infty(X)\cdot\prod_p \left(1-\frac 1 p\right)^5\left(1+\frac 5 p +\frac {1}{p^2}\right),
    \end{equation*}
    with $\alpha(X)=\frac{1}{144}$ and
    \begin{equation*}
        \omega_\infty(X) = \frac{3}{2}\vol\left\{\yb \in \RR^3 : \max_{P \in \Ps}|P(\yb)| \le 1\right\} = 2\pi^2.
    \end{equation*}
\end{theorem*}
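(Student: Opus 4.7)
The plan is to use the universal torsor method. The split quintic del Pezzo surface $X$ has Cox ring generated by ten coordinates $\eta_1,\ldots,\eta_{10}$, one for each of the $(-1)$-curves (the $A_i$ and $A_{ij}$), subject to the five Plücker relations of $\mathrm{Gr}(2,5)$. Standard torsor theory gives a bijection between $U(\QQ)$ and tuples $(\eta_1,\ldots,\eta_{10}) \in \ZZnz^{10}$, modulo a finite sign group, satisfying these relations together with the coprimality conditions $\gcd(\eta_i,\eta_j)=1$ for each pair of disjoint $(-1)$-curves. Every $P \in \Ps$ lifts to a monomial in the $\eta_i$ with exponents encoding the intersection pairing with the ten curves, so the height function \eqref{eq:height} becomes the maximum of twelve explicit monomials. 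The count $N_{U,H}(B)$ is thereby identified with a lattice-point count in a semi-algebraic region of $\RR^{10}$ cut out by the height inequality and the Plücker equations, together with coprimality and sign conditions.

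The next step is to apply Möbius inversion to clear the coprimality conditions, introducing auxiliary divisor parameters and rescaled coordinates. I would then reduce the lattice-point count by a sequence of successive summations. A natural choice is to freeze four of the coordinates (say those corresponding to the exceptional divisors $A_i$) and first sum the remaining six using the five Plücker relations, which on a generic fibre leave exactly one free variable ranging over an interval; the lattice-point count along this interval has $O(1)$ error on top of a volume term. The outer four-dimensional sum is then handled by further one-dimensional counts, producing altogether a main term of order $B(\log B)^4$ from combining the natural $B$-scaling of the polarization direction with four logarithmic contributions coming from the effective cone. Assembling the factors, the Jacobian of the Plücker parametrization yields the archimedean volume $\omega_\infty(X)=2\pi^2$, the Möbius-weighted local densities at each prime combine into the stated Euler product, and the combinatorial constant $\alpha(X)=1/144$ drops out from the resulting integral over the effective cone of $X$, matching Peyre's prediction.

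The main obstacle will be controlling the error. The sharp bound $O(B(\log B)^3 \log\log B)$ leaves only a single $\log\log B$ of slack across a four-dimensional summation, so each successive summation must contribute at most $O(\log B)$ error, with at most one stage allowed a $\log\log B$ term, typically coming from divisor sums $\sum_{d\le X}\tau(d)/d$ produced by the Möbius inversion. This forces a careful choice of the order and grouping of the summations and requires a delicate treatment of the boundary region where one Cox coordinate is small. Moreover, the five Plücker relations overlap in all six ``line'' coordinates, so reducing the first inner sum to a single-variable lattice-point count requires a chart-by-chart analysis whose errors must be tracked uniformly, and the final comparison of the resulting volume integral with the product $\alpha(X)\omega_\infty(X)\prod_p \omega_p(X)$ needs an explicit identification of the invariants from the Cox-ring calculation.
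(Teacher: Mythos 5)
Your high-level strategy—pass to the universal torsor given by the Cox ring ($\Gs$r$(2,5)$), clear coprimality by M\"obius inversion, and estimate the resulting lattice-point count by iterated summation—is exactly the framework the paper uses. However, there is a concrete dimensional error in the heart of your plan, and the genuinely hard part of the proof (achieving the error term $O(B(\log B)^3 \log\log B)$) is left unaddressed in a way that the techniques you sketch cannot fix.

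\textbf{The dimensional error.} You propose to freeze the four exceptional-curve coordinates $a_1,\dots,a_4$ and then claim that the five Pl\"ucker relations ``on a generic fibre leave exactly one free variable ranging over an interval,'' so that the inner count is a $1$-dimensional lattice count with $O(1)$ error. This is false. The five Pl\"ucker relations of $\mathrm{Gr}(2,5)$ are dependent: the affine cone over $\mathrm{Gr}(2,5)\subset\PP^9$ is $7$-dimensional, so after fixing four generic coordinates a $3$-dimensional fibre remains. Concretely (Lemma~\ref{lem:dependent_aij} in the paper), for fixed $\ab'$ three of the relations express $a_{13},a_{24},a_{14}$ as rational functions of $(a_{12},a_{23},a_{34})$, and the remaining two are implied; the inner problem is therefore a $3$-fold sum over $(a_{12},a_{23},a_{34})$ subject to congruences modulo quantities like $a_1$ and $a_4$. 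A triple lattice count in a region of size $\asymp X_1 X_2 X_3$ has boundary error $\asymp X_1X_2+X_1X_3+X_2X_3$, not $O(1)$; these terms are what dominate the error analysis, and your plan simply does not see them.

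\textbf{Why the error budget fails without additional ideas.} Once you accept the $3$-dimensional inner count, you face two problems that you acknowledge only vaguely. First, the archimedean density $\omega_\infty$ is the volume of an \emph{unbounded} region, so there is no bounded box to count in; solutions can have some $a_{ij}$ much larger than the ``typical'' size $B_{ij}$, and you must prove this cusp contribution is negligible. The paper does this by imposing the truncation $|a_{ij}|\le W|B_{ij}|$ with $W=\log B$ and showing the discarded solutions contribute $O(W^{-2}B(\log B)^4)$. Second, the congruence moduli in the inner count are the frozen $a_i$; if an $a_i$ is larger than the range of the corresponding $a_{jk}$, the $O(1)$ error per congruence class swamps the main term. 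The paper overcomes this with a crucial $S_5$-symmetry trick (Lemma~\ref{lem:symmetry}): since the Weyl group acts on the ten coordinates, one may assume, at the cost of a factor $5$, that $|a_1a_2a_3a_4|$ is the smallest of the five products of pairwise-disjoint lines, which forces the $a_i$ to be substantially smaller than the $a_{jk}$. Without an analogue of this step there is no way to make the boundary errors smaller than the main term, let alone as small as $B(\log B)^3\log\log B$. Finally, the M\"obius variables must also be truncated (to a power of $\log B$) via a divisor-function argument so that the error can be summed trivially over them. None of these three mechanisms—the $a_{ij}$-truncation, the symmetry reduction, the M\"obius-variable truncation—appears in your sketch, and the sentence ``requires a delicate treatment of the boundary region'' is not a substitute; those are precisely the missing ideas, and the argument does not close without them.
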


As explained in \cite[\S 1.3]{Bre02} and \cite[\S 2]{BD24}, this agrees with the Manin--Peyre conjecture \cite{FMT89,Pey95}. 

Tschinkel \cite[Theorem~1]{T92} proved the upper bound $N_{U,H}(B) \ll B(\log B)^6$ (see also  \cite[Theorem~1.9]{MT93}).  Salberger improved this to $N_{U,H}(B) \ll B(\log B)^4$ (unpublished, using universal torsors). The above asymptotic formula was first proved by de la Bret\`eche \cite{Bre02} (with a weaker error term, also using universal torsors). Another proof was given by Browning \cite{Br22} (with the same error term as in our result, using conic fibrations). Recently, we generalized these results to arbitrary number fields, also allowing more general height functions \cite{BD24} (with a weaker error term, refining de la Bret\`eche's approach).

Our proof here over $\QQ$ seems to be the shortest and most straightforward one, and will hopefully help the reader who is interested in the case of arbitrary number fields in \cite{BD24}, which follows the same strategy, but is technically much more involved.

\subsection*{Notation and conventions}

We write $(a;b)$ for the greatest common divisor and $[a;b]$ for the lowest common multiple of two integers $a,b$, and similarly for more than two integers. We denote the number of positive divisors of an integer $n$ by $\tau(n)$.

We will use parameters $W=\log B$, $T_1=(\log B)^{2^{22}}$, $T_2=(\log B)^{2^{27}}$; the exact choices of the exponents here are somewhat arbitrary.

If nothing else is stated or clear from the context, $i,j,k,l$ or any subset of them are elements of $\{1,2,3,4\}$ that are pairwise distinct; statements involving them are meant for all such possible choices.

\section{Parameterization and symmetry}

\subsection{Universal torsor parameterization}

The first step in our proof is a parameterization of the rational points of bounded heights by tuples of integers satisfying certain equations, coprimality and height conditions. This can be interpreted as a passage to the universal torsor; see \cite{Skor93}, \cite[\S 1.4]{Bre02}, and the references therein.

\begin{prop}\label{prop:parameterization}
    Let $M(B)$ be the set of $(a_1,a_2,a_3,a_4,a_{12},a_{13},a_{14},a_{23},a_{24},a_{34})$ in $\ZZnz^{10}$ satisfying the height condition
    \begin{equation}\label{eq:height_on_torsor}
        \max|a_{ij}a_ja_{jk}a_ka_{kl}| \le B,
    \end{equation}
    the torsor equation
    \begin{equation}\label{eq:torsor}
        \begin{aligned}
            &a_4a_{14}-a_3a_{13}+a_2a_{12}=0,\\
            &a_4a_{24}-a_3a_{23}+a_1a_{12}=0,\\
            &a_4a_{34}-a_2a_{23}+a_1a_{13}=0,\\
            &a_3a_{34}-a_2a_{24}+a_1a_{14}=0,\\
            &a_{12}a_{34}-a_{13}a_{24}+a_{23}a_{14}=0,
        \end{aligned}  
    \end{equation}
    and the coprimality conditions
    \begin{equation}\label{eq:coprimality}
        (a_i;a_j)=(a_i;a_{jk})=(a_{ij};a_{ik})=1.
    \end{equation}
    Then
    \begin{equation*}
        N_{U,H}(B) = \frac{1}{2^5}|M(B)|.
    \end{equation*}
\end{prop}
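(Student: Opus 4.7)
The plan is to give an explicit $2^5$-to-one correspondence between $M(B)$ and the set of $\xb \in U(\QQ)$ of height at most $B$ by factoring primitive integer triples. Every such $\xb$ is of the form $\pi^{-1}(y_1:y_2:y_3)$ with $(y_1,y_2,y_3) \in \ZZ^3$ satisfying $(y_1;y_2;y_3)=1$, uniquely up to a common sign; moreover $\xb \notin A_i \cup A_{ij}$ forces $y_i \neq 0$ for $i \in \{1,2,3\}$ and $y_i \neq y_j$ for distinct $i,j \in \{1,2,3\}$. Given such a primitive triple together with any choice of signs, I would define
\[
    a_1 := \pm(y_2;y_3),\ a_2 := \pm(y_1;y_3),\ a_3 := \pm(y_1;y_2),\ a_4 := \pm(y_1-y_2;y_1-y_3),
\]
and then $a_{23} := y_1/(a_2a_3)$, $a_{13} := y_2/(a_1a_3)$, $a_{12} := y_3/(a_1a_2)$, together with $a_{14} := (y_2-y_3)/(a_1a_4)$, $a_{24} := (y_1-y_3)/(a_2a_4)$, $a_{34} := (y_1-y_2)/(a_3a_4)$. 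The divisibilities needed for these quotients to be integers follow because $(a_i;a_j)=1$ for $i \neq j$: any common prime divisor of $a_i$ and $a_j$ would divide each of $y_1,y_2,y_3$.

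The coprimality conditions \eqref{eq:coprimality} can then be verified in the same spirit, prime-by-prime: unwinding the gcd definitions, a prime violating any one of them would necessarily divide each of $y_1, y_2, y_3$, contradicting primitivity. The first four equations of \eqref{eq:torsor} are precisely the trivial identities $y_j-y_k = y_j-y_k$ (for distinct $j,k \in \{1,2,3\}$) and $(y_1-y_2)-(y_1-y_3)+(y_2-y_3)=0$, rewritten via the substitutions above and then divided by a common factor $a_1$, $a_2$, $a_3$ or $a_4$; the fifth equation is the algebraic identity
\[
    y_1(y_2-y_3) - y_2(y_1-y_3) + y_3(y_1-y_2) = 0
\]
divided through by $a_1a_2a_3a_4 \in \ZZnz$.

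Conversely, from any $(a_i,a_{ij}) \in \ZZnz^{10}$ satisfying \eqref{eq:torsor} and \eqref{eq:coprimality} one recovers $y_1 := a_{23}a_2a_3$, $y_2 := a_{13}a_1a_3$, $y_3 := a_{12}a_1a_2$; the torsor equations give the expected factorizations of the differences $y_j-y_k$, and a short case analysis (splitting on whether a hypothetical common prime of $y_1, y_2, y_3$ divides some $a_i$) reduces $(y_1;y_2;y_3)=1$ to the coprimality hypotheses. For the height, substitution shows that each $P \in \Ps$ satisfies $P(y_1,y_2,y_3) = a_1a_2a_3a_4 \cdot a_{ij}a_ja_{jk}a_ka_{kl}$ for the permutation $(i,j,k,l)$ of $\{1,2,3,4\}$ determined by $P$ (the symmetry swapping $(i,l)$ leaves the monomial unchanged, identifying the 12 elements of $\Ps$ with these 12 distinct monomials). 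One further invokes the coprimality conditions to show that no prime divides all 12 monomials, obtaining $\gcd_{P \in \Ps}P(y_1,y_2,y_3) = |a_1a_2a_3a_4|$, so that \eqref{eq:height} reduces to \eqref{eq:height_on_torsor}.

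The factor $2^5$ finally arises as $2 \cdot 2^4$: the $2$ accounts for the two primitive triples per point (their overall sign), and the $2^4$ for the independent sign choices on $a_1, a_2, a_3, a_4$, while the signs of the $a_{ij}$ are determined by the quotient formulas defining them. The main technical burden will be the careful prime-by-prime verification of the coprimality conditions in both directions, and in particular the confirmation that no common prime factor beyond $|a_1a_2a_3a_4|$ survives in $\gcd_{P \in \Ps}P$; the remaining steps amount to elementary algebraic bookkeeping.
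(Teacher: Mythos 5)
Your proposal is correct and follows essentially the same approach as the paper: the explicit gcd-based construction $a_1 = (y_2;y_3)$, $a_4 = (y_1-y_2;y_1-y_3)$, etc., followed by the verification of the torsor equations and coprimality conditions, the identification of the twelve elements of $\Ps$ with the twelve monomials $a_1a_2a_3a_4 \cdot a_{ij}a_ja_{jk}a_ka_{kl}$ having gcd $|a_1a_2a_3a_4|$, and the factor $2^5$ from the sign choices on $(y_1,y_2,y_3)$ and $a_1,\dots,a_4$. Your version is somewhat more explicit than the paper's (which leaves most verifications as a "direct computation"), and you correctly have $a_{24}=(y_1-y_3)/(a_2a_4)$ where the paper has a typographical slip.
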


\begin{proof}
    Given a point $x \in U(\QQ)$, let $y=(y_1:y_2:y_3) = \pi(x) \in \PP^2(\QQ)$, with coprime integers $y_1,y_2,y_3$. Then we define
    \begin{equation*}
        a_1 = (y_2;y_3),\ a_2 = (y_1;y_3),\ a_3 = (y_1;y_2),\ a_4 = (y_1-y_2;y_1-y_3),
    \end{equation*}
    and
    \begin{equation*}
        a_{12}=\frac{y_3}{a_1a_2}, a_{13}=\frac{y_2}{a_1a_3}, a_{23}=\frac{y_1}{a_2a_3}, a_{14}=\frac{y_2-y_3}{a_1a_4}, a_{24}=\frac{y_1-y_3}{a_1a_4}, a_{34}=\frac{y_1-y_2}{a_3a_4}.
    \end{equation*}
    We observe that $(a_1,\dots,a_{34})$ satisfy \eqref{eq:torsor} and \eqref{eq:coprimality}. They are unique except for the choice of five signs of $a_1,a_2,a_3,a_4$ and $(y_1,y_2,y_3)$ (and hence $(a_{12},\dots,a_{34})$). Furthermore, the monomials appearing in the definition \eqref{eq:height} of $H$ transform into $(a_{ij}a_ia_j)(a_{jk}a_ja_k)(a_{kl}a_ka_l)$, whose greatest common divisor is $a_1a_2a_3a_4$ by \eqref{eq:coprimality}. Hence $H(x) \le B$ is equivalent to \eqref{eq:height_on_torsor}.
\end{proof}

\subsection{Symmetry}

In the process of counting such tuples on the universal torsor, it will be crucial to identify a certain subset of our ten coordinates that are relatively small compared to the others. The following lemma shows that we may assume the $a_i$ to be those distinguished small variables, using the fact that the Weyl group $S_5$ of the root system $A_4$ acts on the set of ten lines \cite[Chapter IV]{Manin} and hence on the tuples in our counting problem. This will allow us to introduce condition \eqref{eq:ai_bound_TB} below; then we will be able to remove the symmetry condition (Proposition~\ref{prop:symmetry_removed}).

\begin{lemma}\label{lem:symmetry}
    Let $\Mover$ be the set of all $(a_1,\dots,a_{34}) \in \ZZnz^{10}$ satisfying \eqref{eq:height_on_torsor}, \eqref{eq:torsor}, \eqref{eq:coprimality}, and the symmetry condition
    \begin{equation}\label{eq:symmetry}
        |a_ia_ja_k| \le |a_{ij}a_{ik}a_{jk}|.
    \end{equation}
    Let $\Munder$ be defined analogously, with $\le$ replaced by $<$ in \eqref{eq:symmetry}. Then
    \begin{equation*}
        5|\Munder(B)| \le |M(B)| \le 5|\Mover(B)|.
    \end{equation*}
\end{lemma}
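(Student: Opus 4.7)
My plan is to exploit the action of the Weyl group $S_5$ of the root system $A_4$ on the ten exceptional curves of the quintic del Pezzo surface, which lifts to an action on the ten-tuples $(a_1,\dots,a_{34})$ preserving the set $M(B)$. It is convenient to re-index the ten coordinates by the ten unordered pairs of $\{1,\dots,5\}$ via $a_i\leftrightarrow a_{\{i,5\}}$ and $a_{ij}\leftrightarrow a_{\{i,j\}}$; the natural $S_5$-action on these pair-labels (combined with suitable sign twists or Picard-torus rescalings to make the torsor equations \eqref{eq:torsor} invariant) preserves the coprimality conditions \eqref{eq:coprimality} and the symmetric height condition \eqref{eq:height_on_torsor}. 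My first task is therefore to spell out this action explicitly and verify the invariance of each defining condition of $M(B)$.

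With the action in hand, for each $m\in\{1,\dots,5\}$ I define $\Mover_m(B)$ and $\Munder_m(B)$ by replacing the distinguished index $5$ in \eqref{eq:symmetry} by $m$; explicitly, one demands $\prod_{i\in T}|a_{\{i,m\}}|\le\prod_{\{i,j\}\subset T}|a_{\{i,j\}}|$ (respectively $<$) for every triple $T\subset\{1,\dots,5\}\setminus\{m\}$. By $S_5$-equivariance, $|\Mover_m(B)|=|\Mover(B)|$ and $|\Munder_m(B)|=|\Munder(B)|$ for every $m$, with $\Mover(B)=\Mover_5(B)$ and $\Munder(B)=\Munder_5(B)$. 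The lemma then reduces to two pointwise claims: an \emph{existence} statement, that every $\ab\in M(B)$ lies in $\Mover_m(B)$ for at least one $m$, giving $|M(B)|\le\sum_m|\Mover_m(B)|=5|\Mover(B)|$; and a \emph{uniqueness} statement, that every $\ab\in M(B)$ lies in $\Munder_m(B)$ for at most one $m$, giving $5|\Munder(B)|=\sum_m|\Munder_m(B)|\le|M(B)|$.

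For uniqueness I argue by contradiction: if $\ab$ lies in both $\Munder_m$ and $\Munder_{m'}$ for some $m\neq m'$, I pair up the four strict inequalities for $m$ with those for $m'$ whose triples share a common two-element subset, multiply, and obtain $|a_{\{m,m'\}}|<|a_P|$ for every pair $P$ disjoint from $\{m,m'\}$; combined with the torsor equation \eqref{eq:torsor} on the four indices in $\{1,\dots,5\}\setminus\{m,m'\}$, this yields the needed contradiction. For existence I select $m$ extremally, for instance as the index minimizing the star product $S_m=\prod_{n\ne m}|a_{\{m,n\}}|$; averaging automatically gives the multiplied necessary condition $S_m^5\le\bigl(\prod_P|a_P|\bigr)^2$, and the four individual inequalities are then deduced by invoking the Pl\"ucker-type torsor relations \eqref{eq:torsor} on each quadruple containing $m$. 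The principal obstacle is this existence step: minimizing $S_m$ only gives the averaged consequence of \eqref{eq:symmetry} and does not by itself furnish the four individual inequalities, so splitting the averaged bound back into its components genuinely requires the full Pl\"ucker structure of the torsor relations and cannot be reduced to elementary multiplicative estimates.
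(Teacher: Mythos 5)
Your overall plan---re-index by pairs of $\{1,\dots,5\}$, set up five conjugate conditions, and show that every tuple lies in at least one $\Mover_m$ and at most one $\Munder_m$---is the same decomposition the paper uses, but there is a genuine gap that you yourself flag, and it stems from an incorrect choice of re-indexing. The dictionary $a_{ij}\leftrightarrow a_{\{i,j\}}$ is not the one compatible with the Petersen-graph/Weyl-group symmetry of the quintic del Pezzo surface; the correct one is $a_{ij}\leftrightarrow a_{\{k,l\}}$ with $\{k,l\}=\{1,2,3,4\}\setminus\{i,j\}$. Under that dictionary the torsor equations \eqref{eq:torsor} really are the Pl\"ucker relations for 2-subsets of $\{1,\dots,5\}$, and the paper's involutions $s_l$ (swapping $a_k\leftrightarrow a_{ij}$ and fixing $a_l,a_{il}$, with one sign change) become precisely the transpositions $(l\ 5)$ acting on pair labels. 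Under your dictionary, by contrast, $s_4$ sends $a_{\{1,5\}}\leftrightarrow a_{\{2,3\}}$, $a_{\{2,5\}}\leftrightarrow a_{\{1,3\}}$, $a_{\{3,5\}}\leftrightarrow a_{\{1,2\}}$ and fixes the pairs containing $4$, which is not induced by any permutation of $\{1,\dots,5\}$; so the $S_5$-equivariance $|\Mover_m|=|\Mover|$ on which your whole reduction rests is not actually available in your coordinates.

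The consequence is exactly the obstacle you identify in the existence step. With the correct labeling, multiplying the single inequality \eqref{eq:symmetry} for the triple $\{i,j,k\}$ by $|a_l|$ gives $|a_1a_2a_3a_4|\le|a_{ij}a_{ik}a_{jk}a_l|$, and these five quantities $N_5:=|a_1a_2a_3a_4|$, $N_l:=|a_{ij}a_{ik}a_{jk}a_l|$ are exactly the five star-products $\prod_{n\ne m}|a_{\{m,n\}}|$. So $\Mover$ is literally the locus where $N_5$ is the minimum of $\{N_1,\dots,N_5\}$, and existence and uniqueness of the extremal index are tautologies; there is nothing to ``split back'' from an averaged bound, and no Pl\"ucker input is needed beyond knowing that the $s_l$ preserve $M(B)$. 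With your labeling the star-product $S_m$ is a different quantity (e.g.\ $S_5\le S_4$ reads $|a_1a_2a_3|\le|a_{14}a_{24}a_{34}|$, which is not one of the four conditions \eqref{eq:symmetry}), so minimizing $S_m$ genuinely does not imply membership in your $\Mover_m$. In short, the step you call the ``principal obstacle'' is an artifact of the mismatched re-indexing rather than a real difficulty; the paper's proof via the explicit involutions $s_l$ sidesteps it entirely, and your argument needs both the equivariance claim repaired and the existence step completed before it is a proof.
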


\begin{proof}
    We consider the involutions $s_l$ on $M(B)$ exchanging $a_{ij}$ with $a_k$, changing the sign of $a_1$ for $l=1$, of $a_{12}$ for $l=2$, of $a_{34}$ for $l=3$, or of $a_4$ for $l=4$, and fixing the remaining three of the coordinates $a_l,a_{il}$. Since $s_l$ maps $|a_1a_2a_3a_4|$ to $|a_{ij}a_{ik}a_{jk}a_l|$, we may assume by symmetry that $|a_1a_2a_3a_4|$ is (one of) the smallest of these expressions (which correspond to pairwise skew lines on $X$) if we introduce the factor $5$, except that $5|\Munder(B)|$ does not count elements with equality, while $5|\Mover(B)|$ counts these elements multiple times.
\end{proof}

In the following, we will derive an asymptotic formula for $|\Mover(B)|$, but all the arguments will be valid for $|\Munder(B)|$ as well, thus eventually leading to an asymptotic formula for $|M(B)|$.

\subsection{Dependent coordinates}

For simplicity, we write $\ab' = (a_1,a_2,a_3,a_4)$ and $\ab'' = (a_{12},a_{13},a_{14},a_{23},a_{24},a_{34})$. After fixing $\ab'$, the remaining six variables in $\ab''$ are restricted to a three-dimensional linear space. One can thus express three of these variables (which we have chosen as $a_{13},a_{24}$ and $a_{14}$) in terms of the remaining three, imposing certain congruence restrictions on the latter, as we describe now.

\begin{lemma} \label{lem:dependent_aij}
    Let $\ab' \in \ZZnz^4$ and $(a_{12},a_{23},a_{34}) \in \ZZ^3$. If
    \begin{align*}
        &\congr{a_3a_{23}}{a_1a_{12}}{a_4},\\
        &\congr{a_4a_{34}}{a_2a_{23}}{a_1}
    \end{align*}
    hold, then
    \begin{equation}\label{eq:dependent_aij}
        \begin{aligned}
            a_{13}&=\frac{a_2a_{23}-a_4a_{34}}{a_1},\\
            a_{24}&=\frac{a_3a_{23}-a_1a_{12}}{a_4},\\
            a_{14}&=\frac{a_2a_3a_{23}-a_3a_4a_{34}-a_1a_2a_{12}}{a_1a_4}
        \end{aligned}
    \end{equation}
    satisfy the torsor equations \eqref{eq:torsor}, with $a_{13},a_{24} \in \ZZ$. If $(a_1;a_4)=1$, then also $a_{14}\in\ZZ$. Otherwise, \eqref{eq:torsor} has no solution $a_{13},a_{24},a_{14} \in \ZZ$.
\end{lemma}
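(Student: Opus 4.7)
The plan is to regard \eqref{eq:torsor} as a linear system in the three unknowns $a_{13}, a_{24}, a_{14}$ (with all other coordinates treated as fixed parameters) and to solve it explicitly. The key observation is that equation~3 of \eqref{eq:torsor} is linear in $a_{13}$ alone, equation~2 is linear in $a_{24}$ alone, and equation~1 then determines $a_{14}$ in terms of $a_{13}$. Solving equation~3 for $a_{13}$ yields the displayed rational expression, and the integrality of this expression is precisely the congruence $a_4a_{34} \equiv a_2a_{23} \pmod{a_1}$; analogously, equation~2 gives $a_{24}$ as an integer exactly under the other congruence. Substituting the formula for $a_{13}$ into equation~1 and solving for $a_{14}$ produces the claimed rational expression.

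Next, I would verify that equations~4 and~5 of \eqref{eq:torsor} are automatically satisfied by this triple. Equation~4 reduces to a short substitution: plug in the expressions for $a_{14}$ and $a_{24}$, clear the common denominator $a_1a_4$, and observe that every term cancels. Equation~5 is the quadratic Pl\"ucker-type relation $a_{12}a_{34} - a_{13}a_{24} + a_{23}a_{14} = 0$; after multiplying through by $a_1a_4$ and expanding the product $(a_2a_{23}-a_4a_{34})(a_3a_{23}-a_1a_{12})$, the surviving monomials cancel pairwise against $a_1a_4 a_{12}a_{34}$ and $a_{23}(a_2a_3a_{23} - a_3a_4a_{34} - a_1a_2a_{12})$. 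These are routine polynomial identities and present no real obstacle.

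For integrality of $a_{14}$, I would inspect its numerator $N := a_2a_3a_{23} - a_3a_4a_{34} - a_1a_2a_{12}$ modulo $a_1$ and modulo $a_4$ separately. Modulo $a_1$, the last summand vanishes and the first two combine to $a_3(a_2a_{23}-a_4a_{34}) \equiv 0$ by the second congruence; modulo $a_4$, the middle summand vanishes and the rest give $a_2(a_3a_{23}-a_1a_{12}) \equiv 0$ by the first congruence. Hence $[a_1;a_4] \mid N$, and when $(a_1;a_4)=1$ this is the same as $a_1a_4 \mid N$, yielding $a_{14} \in \ZZ$. For the converse, I would invoke that the rational triple $(a_{13}, a_{24}, a_{14})$ defined by \eqref{eq:dependent_aij} is the \emph{unique} rational solution to \eqref{eq:torsor} with the remaining coordinates fixed, since equations~2,~3 and~1 have full rank in $(a_{24}, a_{13}, a_{14})$. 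Therefore any integer solution must coincide with the displayed formulas, and if the stated $a_{14}$ fails to be an integer then no integer solution exists. The one mildly delicate point is this uniqueness step for the converse, but it is immediate from the block-triangular structure of the linear system already exploited in the first paragraph.
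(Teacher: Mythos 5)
Your proof is correct and makes explicit the ``direct computation'' that the paper's proof leaves as a sketch; the approach is the same: solve the triangular subsystem consisting of torsor equations $3$, $2$, $1$ for $a_{13}$, $a_{24}$, $a_{14}$, read off integrality from the two congruences modulo $a_1$ and $a_4$, verify equations $4$ and $5$ by substitution, and invoke uniqueness of the rational solution for the converse. One remark: the final ``Otherwise'' clause of the lemma, read literally as asserting that $(a_1;a_4)>1$ alone forces the non-existence of an integral solution, is not quite accurate — for example $\ab'=(2,1,1,2)$ and $(a_{12},a_{23},a_{34})=(3,2,2)$ satisfy the two congruences and yield $a_{13}=-1$, $a_{24}=-2$, $a_{14}=-2\in\ZZ$ despite $(a_1;a_4)=2$ — whereas your uniqueness argument establishes the correct and evidently intended statement, namely that whenever the displayed value of $a_{14}$ fails to be an integer there is no integral completion of \eqref{eq:torsor}.
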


\begin{proof}
    This is a direct computation. The key point is that for $a_1a_2a_3a_4 \ne 0$, any three of the torsor equations in \eqref{eq:torsor} imply the remaining two. Note that the two congruence assumptions immediately imply the integrality of $a_{13}$ and $a_{24}$, while the integrality of $a_{14}$ follows using the coprimality of $a_1$ and $a_4$.
\end{proof}

\section{Restrictions of the counting problem} \label{sec:restrictions}

Before initiating the main counting argument, it will be necessary to perform certain preliminary truncations of the various sets of variables.

\subsection{Restricting the $a_{ij}$}

From the height conditions \eqref{eq:height_on_torsor} and the relations \eqref{eq:torsor}, we can read off typical sizes $|B_{ij}|$ of the variables $a_{ij}$. In general, however, it is not the case that $a_{ij} \ll |B_{ij}|$; in fact, solutions violating this for certain indices constitute a positive proportion of the overall count. This roughly corresponds to the fact that the achimedean density $\omega_{\infty}$ is the volume of an unbounded region. However, we are able to show that solutions moderately high in such "cusps" yield a negligible contribution and can henceforth be ignored. This truncation is crucial to control the error terms in the later stages of the argument, and constitutes one of the novelties in our argument compared to the one in \cite{Bre02}.

We define
\begin{equation}\label{eq:def_Bij}
    B_{ij} = \frac{(B|a_1a_2a_3a_4|)^{1/3}}{a_ia_j}
\end{equation}
and consider the condition
\begin{equation}\label{eq:restriction_aij}
    |a_{ij}| \le W|B_{ij}|
\end{equation}
for all $i,j$ (using \eqref{eq:dependent_aij} for $a_{13},a_{24},a_{14}$) in order to define
\begin{align*}
    A(W,\ab',B) &= \{\ab'' \in \ZZnz^6 : \eqref{eq:height_on_torsor}, \eqref{eq:torsor}, \eqref{eq:coprimality}, \eqref{eq:restriction_aij}\},\\
    \Aover(W,\ab',B) &= \{\ab'' \in \ZZnz^6 : \eqref{eq:height_on_torsor}, \eqref{eq:torsor}, \eqref{eq:coprimality}, \eqref{eq:symmetry}, \eqref{eq:restriction_aij}\}.
\end{align*}
Since all $a_j,a_{kl}$ are nonzero, \eqref{eq:height_on_torsor} implies that these sets are empty unless all
\begin{equation}\label{eq:ai_bound_B}
    |a_i| \le B.
\end{equation} 

\begin{prop} \label{prop:restrict_W}
    For $W \ge 1$, we have
    \begin{equation*}
        |\Mover(B)| = \sum_{\ab' \in \ZZnz^4} |\Aover(W,\ab',B)| + O(W^{-2}B(\log B)^4).
    \end{equation*}
\end{prop}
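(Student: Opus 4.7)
The difference $|\Mover(B)|-\sum_{\ab'}|\Aover(W,\ab',B)|$ counts those tuples in $\Mover(B)$ violating \eqref{eq:restriction_aij}, i.e.\ with $|a_{ij}|>W|B_{ij}|$ for at least one pair $(i,j)$. A union bound together with the transitive $S_4$-action on the six pairs---the residual, after fixing $\ab'$ as the small coordinates, of the Weyl group $S_5$ symmetry from Lemma~\ref{lem:symmetry}---reduces matters to bounding the count with $|a_{12}|>W|B_{12}|$ by $O(W^{-2}B(\log B)^4)$.

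Fix $\ab'\in\ZZnz^4$ with $(a_1;a_4)=1$ and $|a_i|\le B$. Apply Lemma~\ref{lem:dependent_aij} to parameterize $\ab''$ by the free triple $(a_{12},a_{23},a_{34})\in\ZZnz^3$ subject to two linear congruences modulo $a_4$ and $a_1$. The height monomial $|a_{12}a_2a_{23}a_3a_{34}|\le B$, together with the cusp assumption, gives $|a_{23}a_{34}|<W^{-1}|B_{23}B_{34}|$. Consequently the $a_{23},a_{34}$-terms in \eqref{eq:dependent_aij} become subleading, so $a_{14}\approx -a_2a_{12}/a_4$ and $a_{24}\approx -a_1a_{12}/a_4$; since $B_{14}/B_{12}=a_2/a_4$ and $B_{24}/B_{12}=a_1/a_4$, we conclude that $|a_{14}|\gg W|B_{14}|$ and $|a_{24}|\gg W|B_{24}|$ as well. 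Now the height monomial $|a_{24}a_2a_{12}a_1a_{13}|\le B$ (corresponding to $(i,j,k,l)=(4,2,1,3)$) reduces to $|a_{12}|^2|a_{13}|\ll|B_{12}|^2|B_{13}|$, yielding the key estimate $|a_{13}|\ll|B_{13}|/W^2$---a full $W^2$-saving on this dependent variable.

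Via \eqref{eq:dependent_aij}, the bound $|a_{13}|\ll|B_{13}|/W^2$ confines $(a_{23},a_{34})$ to the thin strip $|a_2a_{23}-a_4a_{34}|\ll|a_1|\cdot|B_{13}|/W^2$. A standard lattice-point estimate then shows that the number of integer triples $(a_{12},a_{23},a_{34})$ in this strip, intersected with the height region and the two arithmetic progressions (each saving a factor $|a_1|^{-1}$ or $|a_4|^{-1}$), is $O(W^{-2}B|a_1a_2a_3a_4|^{-1})$ up to polylogarithmic factors. Summing over $\ab'$ with $|a_i|\le B$ and the coprimality from \eqref{eq:coprimality}, using $\sum|a_1a_2a_3a_4|^{-1}\ll(\log B)^4$, yields the claimed $O(W^{-2}B(\log B)^4)$.

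The main obstacle is twofold. First, the cascade $|a_{14}|,|a_{24}|\gg W|B_{14}|,W|B_{24}|$ can fail in degenerate configurations (e.g.\ where $|a_{23}a_{34}|$ is not substantially smaller than $W^{-1}|B_{23}B_{34}|$, or where further cancellation occurs among the terms of \eqref{eq:dependent_aij}); such cases must be treated separately by necessarily cruder bounds that still respect the target. Second, the lattice-to-volume passage introduces $O(1)$ boundary errors per dyadic box, whose aggregate across $\ab'$ must also lie below $O(W^{-2}B(\log B)^4)$, which needs the dyadic summation to interact benignly with the $W^{-2}$ gain from the strip.
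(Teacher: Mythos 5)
The overall strategy---dyadic decomposition in $W$, reduction by $S_4$-symmetry to a single cusp, and a lattice-point count with a $W^{-2}$-saving---is in the right spirit, but your execution has several genuine gaps, some of which you yourself flag but do not close.

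First, the step ``the $a_{23},a_{34}$-terms in \eqref{eq:dependent_aij} become subleading'' is false as stated. The height condition gives only the \emph{product} bound $|a_{23}a_{34}|<W^{-1}|B_{23}B_{34}|$, which is perfectly compatible with (say) $|a_{23}|\asymp W|B_{23}|$ and $|a_{34}|\ll W^{-2}|B_{34}|$. In that configuration $a_3a_{23}$ is \emph{not} subleading in the formula for $a_{24}$, and the cascade $|a_{14}|,|a_{24}|\gg W|B_{14}|,W|B_{24}|$ simply fails. What the paper does instead is to first identify, via the (normalized, linear) torsor relations, that at least two of the $z_{ij}$ must be comparable to the maximum; it then pins down a concrete configuration (after a further $S_4$-relabelling): two variables $\asymp W$, and $z_{12},z_{23},z_{24}\ll W^{-2}$. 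This case analysis is the missing ingredient in your ``subleading'' claim. Note that this also dictates a different choice of free cusp variable: the paper arranges for $z_{12},z_{23}$ (two of its three summation variables) to be small and $z_{34}$ moderate, whereas your normalization $|z_{12}|>W$ makes $a_{12}$ range over a \emph{long} interval and then relies on a thin-strip saving in the $(a_{23},a_{34})$-plane, which is both more delicate and not carried out.

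Second, and more fundamentally, you never invoke the symmetry condition \eqref{eq:symmetry}, yet the statement is about $\Aover$, whose whole point is that \eqref{eq:symmetry} holds. The paper uses it crucially: combining \eqref{eq:symmetry} with the cusp bounds $|a_{ij}|\ll W|B_{ij}|$ yields $|a_1|\ll W|B_{34}|$ and $|a_4|\ll W^{-1}|B_{23}|$, which are exactly what is needed so that, in the congruence counts
\begin{equation*}
    \Bigl(\tfrac{|B_{23}|}{|a_4|W^2}+O(1)\Bigr)\Bigl(\tfrac{W|B_{34}|}{|a_1|}+O(1)\Bigr),
\end{equation*}
the $O(1)$ error terms can be absorbed into the main terms without losing the $W^{-2}$. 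Without \eqref{eq:symmetry}, the moduli $a_1,a_4$ could dwarf the ranges of $a_{34},a_{23}$, and the $O(1)$ boundary errors you mention in your final paragraph really do overwhelm the target $O(W^{-2}B(\log B)^4)$. Your appeal to ``a standard lattice-point estimate'' with ``polylogarithmic factors'' is not good enough here: the main term of the whole count is already $B(\log B)^4$, so even a single spare factor of $\log B$ would defeat the purpose. In short, the two obstacles you list at the end are not side remarks but the heart of the matter, and the paper's mechanism for overcoming them---the torsor-equation case analysis together with \eqref{eq:symmetry}---is absent from your argument.
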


\begin{proof}
    We need to discard solutions where one of the conditions \eqref{eq:ai_bound_B} is violated. Renormalize the variables to $z_{ij}:=a_{ij}/B_{ij}$ so that four of the torsor equations \eqref{eq:torsor} become $z_{ij} \pm z_{ik} \pm z_{il}=0$ while the height conditions \eqref{eq:height_on_torsor} imply $z_{ij}z_{jk}z_{kl} \ll 1$.

    Let us first suppose that $\max |z_{ij}| \asymp W$ for some $W>1$. The torsor equations then imply that at least two variables in one equation must be $\asymp W$. By symmetry, we may suppose that $|z_{13}|, |z_{34}| \asymp W$. The height conditions now imply $z_{12}, z_{24} \ll \frac{1}{W^2}$ and then the torsor equations also imply $z_{23} \ll \frac{1}{W^2}$.

    To summarize, we have $z_{12}, z_{23} \ll \frac{1}{W^2}$ and $z_{34} \ll W$. This choice corresponds to working on the set $E_2$ in \cite{Bre02} and is beneficial for our choice $(a_{12},a_{23},a_{34})$ of summation variables, exploiting that while one of them can be relatively large, the other two will be much smaller, leading to an eventual saving over the size of the main term.

    However, we still need to take care of the error terms arising from the congruence conditions. This is where the symmetry assumption \eqref{eq:symmetry} enters the picture, ensuring that the moduli $a_i$ in the congruences from Lemma \ref{lem:dependent_aij} are not too big compared to the variables $a_{jk}$ we are summing over.

    Indeed, \eqref{eq:symmetry} now implies that
    \begin{equation*}
    |a_1a_3a_4| \ll |a_{13}a_{14}a_{34}| \ll W^3 |B_{13}B_{14}B_{34}|=W^3 \frac{B|a_1a_2a_3a_4|}{|a_1a_3a_4|^2}
    \end{equation*}
    and hence $|a_1a_3a_4| \ll W (B|a_1a_2a_3a_4|)^{1/3}$. This is equivalent to $a_1 \ll W|B_{34}|$. Similarly, we obtain that $a_4 \ll \frac{1}{W}|B_{23}|$.

    Now the number of such solutions can be estimated by
    \begin{align*}
    &\sum_{a_1,a_2,a_3,a_4} \sum_{a_{12} \ll |B_{12}|/W^2} \sums{a_{23} \ll |B_{23}|/W^2\\a_{23} \Mod{a_4}} \sums{a_{34} \ll W|B_{34}|\\ a_{34} \Mod{a_1}} 1\\
    &\ll \sum_{a_1,a_2,a_3,a_4} \frac{|B_{12}|}{W^2} \cdot \left(\frac{|B_{23}|}{|a_4|W^2}+1\right)\left(\frac{W|B_{34}|}{|a_1|}+1\right)\\
    &\ll \sum_{a_1,a_2,a_3,a_4} \frac{|B_{12}|}{W^2} \cdot \frac{|B_{23}|}{W|a_4|} \cdot \frac{W|B_{34}|}{|a_1|}=\frac{1}{W^2} \sum_{a_1,a_2,a_3,a_4} \frac{|B_{12}B_{23}B_{34}|}{|a_1a_4|}\\
    &=\frac{1}{W^2} \sum_{a_1,a_2,a_3,a_4} \frac{B}{|a_1a_2a_3a_4|} \ll \frac{1}{W^2} B(\log B)^4.
    \end{align*}
    Here, we use Lemma \ref{lem:dependent_aij} in the first step,
    \begin{equation}\label{eq:sum_congruence}
        |\{n \in \ZZ : 0 < n \le t,\ \congr{n}{c}{a}\}| = \frac{t}{|a|} + O(1)
    \end{equation}
    (which holds for any $a \in \ZZnz$ and $c \in \ZZ$) in the second step, the estimations above in the third step, and \eqref{eq:ai_bound_B} in the final step.

    Thus the number of solutions with $\max |z_{ij}| \asymp W$ is $O(W^{-2}B(\log B)^4)$ for all $W>1$. The claim follows by a dyadic summation over the range of $W$.
\end{proof}

\begin{lemma}\label{lem:ai_bound_WB}
    For $W \ge 1$, let $\ab' \in \ZZnz^4$ be such that $\Aover(W,\ab',B)$ is nonempty. Then $\ab'$ satisfies
    \begin{align}
        |a_i^2a_j^2a_k^2a_l^{-1}| &\le W^3B,\label{eq:ai_bound_WB_1}\\
        |a_i| &\le W|B_{jk}|.\label{eq:ai_bound_WB_2}
    \end{align}
\end{lemma}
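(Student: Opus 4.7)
The plan is to derive both inequalities by directly combining the two hypotheses satisfied by any $\ab'' \in \Aover(W,\ab',B)$: the symmetry condition \eqref{eq:symmetry} and the coordinatewise restriction \eqref{eq:restriction_aij}.

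My first observation would be that \eqref{eq:ai_bound_WB_1} and \eqref{eq:ai_bound_WB_2} are equivalent reformulations of a single inequality. Unfolding the definition \eqref{eq:def_Bij} in \eqref{eq:ai_bound_WB_2} and clearing denominators gives $|a_ia_ja_k| \le W(B|a_1a_2a_3a_4|)^{1/3}$. Cubing and dividing by the nonzero quantity $|a_ia_ja_k|$ (valid since $\ab' \in \ZZnz^4$) produces $|a_ia_ja_k|^2 \le W^3 B |a_l|$, which is exactly \eqref{eq:ai_bound_WB_1}. So it suffices to prove \eqref{eq:ai_bound_WB_1}.

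To do that, I would pick any $\ab''$ in the nonempty set $\Aover(W,\ab',B)$. The symmetry condition \eqref{eq:symmetry} gives $|a_ia_ja_k| \le |a_{ij}a_{ik}a_{jk}|$, and then \eqref{eq:restriction_aij} applied to the three pairs $\{i,j\}, \{i,k\}, \{j,k\}$ bounds the right-hand side by $W^3|B_{ij}B_{ik}B_{jk}|$. A direct computation from \eqref{eq:def_Bij} yields
\[
|B_{ij}B_{ik}B_{jk}| = \frac{B|a_1a_2a_3a_4|}{|a_ia_ja_k|^2} = \frac{B|a_l|}{|a_ia_ja_k|},
\]
and rearranging the resulting chain $|a_ia_ja_k| \le W^3 B |a_l| / |a_ia_ja_k|$ gives exactly \eqref{eq:ai_bound_WB_1}.

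There is essentially no genuine obstacle here; the only conceptual point to notice is that the two claimed inequalities are algebraically the same after taking a cube root, after which the lemma reduces to a one-line application of symmetry followed by the $a_{ij}$-restriction, with the four $a_i$ contributions in the numerator of $|B_{ij}B_{ik}B_{jk}|$ collapsing against the three repeated indices to leave a single factor of $|a_l|$.
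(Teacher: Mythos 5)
Your proof is correct and takes essentially the same route as the paper: choose an $\ab''$ in the nonempty set, combine the symmetry condition \eqref{eq:symmetry} with the restriction \eqref{eq:restriction_aij} to bound $|a_ia_ja_k|$ by $W^3|B_{ij}B_{ik}B_{jk}|$, and simplify using \eqref{eq:def_Bij}. The observation that \eqref{eq:ai_bound_WB_1} and \eqref{eq:ai_bound_WB_2} are algebraically equivalent is also exactly how the paper closes the argument.
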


\begin{proof}
    Choose $\ab'' \in \Aover(W,\ab',B)$. Combining \eqref{eq:restriction_aij} with \eqref{eq:symmetry}, the first inequality holds because of
    \begin{equation*}
        |a_ia_ja_k| \le |a_{ij}a_{ik}a_{kl}| \le W^3|B_{ij}B_{ik}B_{jk}| = \frac{W^3B|a_1a_2a_3a_4|}{|a_i^2a_j^2a_k^2|}.
    \end{equation*}
    The second one is equivalent to the first using definition \eqref{eq:def_Bij}.
\end{proof}

\begin{lemma} \label{lem:bound_a''}
    For $\ab' \in \ZZnz^4$, we have
    \begin{equation*}
        |\Aover(W,\ab',B)| \ll \frac{B}{|a_1a_2a_3a_4|}.
    \end{equation*}
\end{lemma}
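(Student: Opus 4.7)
The plan is to parameterize $\Aover(W,\ab',B)$ by the three free coordinates $(a_{12},a_{23},a_{34})$ via Lemma~\ref{lem:dependent_aij}, then estimate the resulting sum using the truncation \eqref{eq:restriction_aij} together with the congruence-counting identity \eqref{eq:sum_congruence}. If $\Aover(W,\ab',B)$ is empty, the claim is trivial, so assume otherwise; in that case Lemma~\ref{lem:dependent_aij} forces $(a_1;a_4)=1$, and each $\ab''\in\Aover(W,\ab',B)$ is uniquely encoded by a triple $(a_{12},a_{23},a_{34})\in\ZZnz^3$ satisfying the box conditions $|a_{12}|\le W|B_{12}|$, $|a_{23}|\le W|B_{23}|$, $|a_{34}|\le W|B_{34}|$ from \eqref{eq:restriction_aij}, together with the congruences
\begin{equation*}
    a_3a_{23}\equiv a_1a_{12}\Mod{a_4},\qquad a_4a_{34}\equiv a_2a_{23}\Mod{a_1}.
\end{equation*}

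Summing over $a_{12}$ freely, then $a_{23}$ in a residue class modulo $a_4$, then $a_{34}$ in a residue class modulo $a_1$, and applying \eqref{eq:sum_congruence} at each step, I would obtain
\begin{equation*}
    |\Aover(W,\ab',B)|\ll W|B_{12}|\cdot\left(\frac{W|B_{23}|}{|a_4|}+1\right)\cdot\left(\frac{W|B_{34}|}{|a_1|}+1\right).
\end{equation*}
The crucial step is to absorb the $+1$ terms into the main terms, and this is exactly what Lemma~\ref{lem:ai_bound_WB} supplies: applying \eqref{eq:ai_bound_WB_2} twice (to the triples $(i,j,k)=(4,2,3)$ and $(i,j,k)=(1,3,4)$) gives $|a_4|\le W|B_{23}|$ and $|a_1|\le W|B_{34}|$, so both $W|B_{23}|/|a_4|$ and $W|B_{34}|/|a_1|$ are $\ge 1$ and each factor collapses to its leading term. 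The product then reduces to $\ll W^3|B_{12}B_{23}B_{34}|/|a_1a_4|$, and a direct computation from \eqref{eq:def_Bij} yields the identity
\begin{equation*}
    |B_{12}B_{23}B_{34}|=\frac{B|a_1a_2a_3a_4|}{|a_1a_2|\cdot|a_2a_3|\cdot|a_3a_4|}=\frac{B}{|a_2a_3|},
\end{equation*}
giving the claimed bound, with the residual $W^3$ absorbed into the implicit constant.

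The main potential obstacle is precisely this absorption step: it relies essentially on the symmetry condition \eqref{eq:symmetry} (built into the definition of $\Aover$) through its role in the proof of Lemma~\ref{lem:ai_bound_WB}. Without the symmetry condition the moduli $a_1,a_4$ could be very large compared to the ranges of $a_{34},a_{23}$, so that the $+1$ contributions dominate and the bound would fail; this mirrors the analogous role played by \eqref{eq:symmetry} in the proof of Proposition~\ref{prop:restrict_W}.
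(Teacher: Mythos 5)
Your argument proves only the weaker bound $|\Aover(W,\ab',B)| \ll W^3 B/|a_1a_2a_3a_4|$, and the claim that ``the residual $W^3$ is absorbed into the implicit constant'' is not valid: $W$ is a parameter of the lemma (later taken to be $\log B$), and the implied constant must be uniform in $W$. This matters — the remark immediately after the lemma uses it to deduce the bound $O(B(\log B)^4)$ for the total count, and the proof of the subsequent lemma (after \eqref{eq:ai_bound_TB}) sums your bound over $\ab'$ expecting an error $O(B(\log B)^3\log\log B)$; in both places an extra $W^3 = (\log B)^3$ would be fatal.

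The source of the loss is that you only use the box truncation \eqref{eq:restriction_aij} (together with the congruences from Lemma~\ref{lem:dependent_aij}) and discard the height condition \eqref{eq:height_on_torsor}. In the normalized coordinates $z_{ij}=a_{ij}/B_{ij}$, the box $\max|z_{ij}|\le W$ has volume $\asymp W^3$, but its intersection with the height region $\max|z_{ij}z_{jk}z_{kl}|\le 1$ (subject also to the torsor relations) has volume $O(1)$ uniformly in $W$; compare Lemma~\ref{lem:real_density}. The paper's proof recovers this extra cancellation by running the dyadic decomposition from the proof of Proposition~\ref{prop:restrict_W}: on the slice $\max|z_{ij}|\asymp W'$ with $W'>1$, the height and torsor constraints force (up to permutation) $z_{12},z_{23}\ll 1/W'^2$ while $z_{34}\ll W'$, yielding a contribution $\ll W'^{-2} B/|a_1a_2a_3a_4|$ for fixed $\ab'$; summing this over dyadic $W'\in[1,W]$ and adding the $W'=1$ case (which is exactly your computation with $W$ replaced by $1$) gives the lemma. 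To repair your argument you would need to break into dyadic ranges of $\max|z_{ij}|$ and use \eqref{eq:height_on_torsor} in the way just described, rather than summing over the full box.
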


\begin{proof}
    In the notation of the proof of Proposition~\ref{prop:restrict_W}, this follows in the same way as long as $\max |z_{ij}|>1$. However, if $\max |z_{ij}| \le 1$, we can still run the same argument with $W=1$, noting that we only used the upper bounds in the second part of that proof. 
\end{proof}

\begin{remark*}
    Note that this already proves the upper bound $O(B(\log B)^4)$ for the total count by summing over $a_1,a_2,a_3,a_4$.
\end{remark*} 

\subsection{Restricting the $a_i$}

Next, we improve \eqref{eq:ai_bound_WB_1} to
\begin{equation}\label{eq:ai_bound_TB}
    |a_i^2a_j^2a_k^2a_l^{-1}| \le T_2^{-1}B,
\end{equation}
which is equivalent to
\begin{equation}\label{eq:ai_bound_TB_2}
    |a_i|\le T_2^{-1/3}|B_{jk}|.
\end{equation}

The improvement from \eqref{eq:ai_bound_WB_2} to \eqref{eq:ai_bound_TB_2} ensures that the error terms arising from the congruence conditions are not just controllable, but in fact negligible compared to the main term, even after summing over the Möbius variables later.

\begin{lemma} \label{lem:bound_a'_B}
    For $T_2 \ge 1$ and $\ab \in \ZZnz^4$ with \eqref{eq:ai_bound_TB}, we have $|a_i| \le B$.
\end{lemma}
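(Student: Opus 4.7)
The plan is to combine \eqref{eq:ai_bound_TB} for several different choices of its indices and to exploit that the $a_n$ are nonzero integers. To bound $|a_m|$ for a fixed $m \in \{1,2,3,4\}$, I would apply \eqref{eq:ai_bound_TB} with the distinguished index $l$ running over the three values in $\{1,2,3,4\} \setminus \{m\}$ while $m$ itself plays the role of one of $i,j,k$. This yields three a priori distinct inequalities of the shape $|a_m a_p a_q|^2 \le T_2^{-1} B\,|a_l|$, where $\{p,q\} = \{1,2,3,4\} \setminus \{l,m\}$.

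Next I would multiply these three inequalities together. The key observation is that each index $n \ne m$ lies in exactly two of the pairs $\{p,q\}$ (namely those corresponding to the two values of $l \notin \{m,n\}$), and hence contributes $a_n^4$ to the left-hand side, while each $|a_l|$ appears exactly once on the right. After taking cube roots, the product should collapse to
\[
|a_m|^2 \prod_{n \ne m} |a_n| \le T_2^{-1} B,
\]
equivalently $|a_m| \cdot |a_1 a_2 a_3 a_4| \le T_2^{-1} B$.

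Since each $a_n$ is a nonzero integer, $|a_1 a_2 a_3 a_4| \ge 1$, which immediately yields $|a_m| \le T_2^{-1} B \le B$ under the hypothesis $T_2 \ge 1$. I do not expect a serious obstacle; the only point requiring care is the bookkeeping of exponents when multiplying the three instances of \eqref{eq:ai_bound_TB}.
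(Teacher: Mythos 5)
Your proof is correct: multiplying the three instances of \eqref{eq:ai_bound_TB} with $l\ne m$ indeed gives $|a_m|\cdot|a_1a_2a_3a_4|\le T_2^{-1}B$, and integrality of the $a_n$ finishes. The paper takes the same basic tack (multiply instances of \eqref{eq:ai_bound_TB} and use $|a_n|\ge 1$), just using all four instances at once to obtain $|a_1a_2a_3a_4|^5\le T_2^{-4}B^4$; the difference is purely bookkeeping.
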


\begin{proof}
    The product of the four inequalities \eqref{eq:ai_bound_TB} is $|a_1\dots a_4|^5 \le T_2^{-4}B^4 \le B^4$.
\end{proof}

\begin{lemma}
    For $T_2 = (\log B)^{2^{27}}$ and $W=\log B$, we have
    \begin{equation*}
        |M(B)| = \sums{\ab' \in \ZZnz^4\\\eqref{eq:ai_bound_TB}} |\Aover(W,\ab',B)|+O(B(\log B)^3 \log\log B).
    \end{equation*}
\end{lemma}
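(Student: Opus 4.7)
The plan is to combine Proposition~\ref{prop:restrict_W} (which, with $W = \log B$, introduces an error of $O(B(\log B)^2)$) with a direct bound on the contribution of those $\ab'$ that fail \eqref{eq:ai_bound_TB}. Proposition~\ref{prop:restrict_W} gives
\[
|\Mover(B)| = \sum_{\ab' \in \ZZnz^4} |\Aover(W,\ab',B)| + O(B(\log B)^2),
\]
and the same argument handles $|\Munder(B)|$; by Lemma~\ref{lem:symmetry} the quality of this error is preserved in passing to $|M(B)|$. Thus the task reduces to showing that the $\ab'$ making $\Aover(W,\ab',B)$ nonempty but violating \eqref{eq:ai_bound_TB} contribute $O(B(\log B)^3 \log\log B)$ to the sum.

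By Lemma~\ref{lem:ai_bound_WB}, nonemptiness forces \eqref{eq:ai_bound_WB_1}, and by Lemma~\ref{lem:bound_a''}, each such $\ab'$ contributes $|\Aover(W,\ab',B)| \ll B / |a_1 a_2 a_3 a_4|$. At the cost of a constant factor I assume the violation occurs for $l=4$, i.e.\ $|a_1 a_2 a_3|^2 > T_2^{-1} B |a_4|$. Together with the upper bound $|a_1 a_2 a_3|^2 \le W^3 B |a_4|$ from \eqref{eq:ai_bound_WB_1}, the product $N = |a_1 a_2 a_3|$ is pinned to a dyadic window of logarithmic length $\tfrac12 \log(W^3 T_2) = O(\log\log B)$; the range of $a_4$ is bounded by $|a_4| \le B$ via \eqref{eq:ai_bound_B}.

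The sum to estimate is then
\[
B \sums{a_4 \in \ZZnz\\|a_4|\le B} \frac{1}{|a_4|} \sums{a_1,a_2,a_3\in\ZZnz\\(T_2^{-1}B|a_4|)^{1/2} < |a_1 a_2 a_3| \le (W^3 B|a_4|)^{1/2}} \frac{1}{|a_1 a_2 a_3|}.
\]
For each dyadic scale $X \asymp N$, the divisor bound $\sum_{m \asymp X} \tau_3(m)/m \ll (\log X)^2$ gives $O((\log B)^2)$ for the innermost sum; there are $O(\log\log B)$ such dyadic scales, and $\sum_{|a_4|\le B} 1/|a_4| \ll \log B$. Multiplying yields $B \cdot \log B \cdot (\log B)^2 \cdot \log\log B = B(\log B)^3 \log\log B$, as desired.

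The principal obstacle is calibrating the saving. The multiplicative slack between \eqref{eq:ai_bound_TB} and the necessary condition \eqref{eq:ai_bound_WB_1} is exactly $W^3 T_2 = (\log B)^{O(1)}$, which produces a single $\log\log B$ factor---precisely what the target error tolerates. The remaining ingredients---Lemmas~\ref{lem:symmetry}, \ref{lem:ai_bound_WB}, \ref{lem:bound_a''}, and the $\tau_3$ divisor bound---are standard and enter only at the level of bookkeeping.
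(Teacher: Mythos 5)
Your proof is correct and follows essentially the same route as the paper's. Both reduce, via Proposition~\ref{prop:restrict_W} and Lemma~\ref{lem:bound_a''}, to bounding $\sum B/|a_1a_2a_3a_4|$ over those $\ab'$ satisfying $T_2^{-1}B<|a_i^2a_j^2a_k^2a_l^{-1}|\le W^3B$ for some $l$, and both exploit that the multiplicative slack $W^3T_2=(\log B)^{O(1)}$ confines a quantity to $O(\log\log B)$ dyadic scales. The only real difference is bookkeeping: you fix $a_4$, localize $|a_1a_2a_3|$ to $O(\log\log B)$ dyadic windows, apply $\sum_{m\asymp X}\tau_3(m)/m\ll(\log X)^2$ on each, and then spend $\sum_{|a_4|\le B}1/|a_4|\ll\log B$; the paper instead fixes $a_1,a_2,a_3$, confines $a_4$ to a union $I$ of $O(\log\log B)$ dyadic intervals so that $\sum_{a_4\in I}1/|a_4|\ll\log\log B$, and then sums $a_1,a_2,a_3\le B$ trivially for $(\log B)^3$. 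Both give $B(\log B)^3\log\log B$ with comparable effort; the paper's version avoids invoking the $\tau_3$ divisor estimate, which is a marginal simplification.

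One small caveat worth flagging: the lemma as printed has $|M(B)|$ on the left, but by Lemma~\ref{lem:symmetry} one has $|M(B)|\asymp 5|\Mover(B)|$, and the downstream Proposition~\ref{prop:restrict_moebius} is stated for $|\Mover(B)|$, so the intended left-hand side here must be $|\Mover(B)|$; the factor of $5$ and the comparison with $|\Munder(B)|$ only enter at the very end of the paper. Your one-sentence appeal to Lemma~\ref{lem:symmetry} to ``preserve the error in passing to $|M(B)|$'' would actually lose a factor of $5$ on the main term if taken literally, but this reflects a typo in the paper's statement rather than a gap in your argument, which is a correct proof of the statement with $|\Mover(B)|$.
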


\begin{proof}
    We need to discard solutions where \eqref{eq:ai_bound_TB} is violated. By symmetry and Lemma~\ref{lem:ai_bound_WB}, it suffices to bound the number of solutions with
    \begin{equation*} 
    T_2^{-1}B <|a_1^2a_2^2a_3^2a_4^{-1}| \le W^3B.
    \end{equation*}
    But for fixed $a_1,a_2,a_3$, this restricts $a_4$ to a union $I$ of $O(\log\log B)$ dyadic intervals. Since $\sum_{a_4 \in I} \frac{1}{|a_4|} \ll \log\log B$, the bound follows from Lemma~\ref{lem:bound_a''} by summing over $a_1,a_2,a_3$ trivially with the help of Lemma~\ref{lem:bound_a'_B}.
\end{proof}

\subsection{Removing the symmetry condition}

Finally, we remove the condition \eqref{eq:symmetry} appearing in $\Aover(W,\ab',B)$.

\begin{prop}\label{prop:symmetry_removed}
    For $T_2 = (\log B)^{2^{27}}$ and $W=\log B$, we have
    \begin{equation*}
        |M(B)| = \sums{\ab' \in \ZZnz^4\\\eqref{eq:ai_bound_TB}} |A(W,\ab',B)| + O(B(\log B)^3 \log \log B).
    \end{equation*}
\end{prop}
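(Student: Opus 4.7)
The preceding lemma already gives the desired asymptotic with $\Aover$ in place of $A$, so the plan is to establish the symmetric-difference estimate
\[
\sums{\ab'\in\ZZnz^4\\\eqref{eq:ai_bound_TB}}\bigl(|A(W,\ab',B)|-|\Aover(W,\ab',B)|\bigr)\ll B(\log B)^3\log\log B.
\]
A tuple in $A\setminus\Aover$ is one for which \eqref{eq:symmetry} fails for some $l\in\{1,2,3,4\}$, i.e.\ $|a_{ij}a_{ik}a_{jk}a_l|<|a_1a_2a_3a_4|$. A union bound over the four choices of $l$, combined with the $S_4$-symmetry acting on $(a_1,a_2,a_3,a_4)$ and simultaneously on the $a_{ij}$ by index permutation (under which \eqref{eq:ai_bound_TB}, \eqref{eq:height_on_torsor}, \eqref{eq:torsor}, \eqref{eq:coprimality} and \eqref{eq:restriction_aij} are all invariant), reduces matters to the single case $l=1$: $|a_{23}a_{24}a_{34}|<|a_2a_3a_4|$.

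The crucial observation is that, under the improved truncation \eqref{eq:ai_bound_TB}, this failure of symmetry is in fact much stronger than it looks. Renormalizing $z_{ij}:=a_{ij}/B_{ij}$ as in the proof of Proposition~\ref{prop:restrict_W} and using the elementary identity $|B_{23}B_{24}B_{34}|=B|a_1|/|a_2a_3a_4|$ converts the condition into $|z_{23}z_{24}z_{34}|<|a_1a_2a_3a_4|^2/(B|a_1|^3)$; by \eqref{eq:ai_bound_TB} with $l=1$ the right-hand side is at most $T_2^{-1}$. Pigeonhole together with the residual $S_3$-symmetry on $(a_2,a_3,a_4)$ then lets me assume $|z_{23}|\le T_2^{-1/3}$, equivalently $|a_{23}|\le T_2^{-1/3}|B_{23}|$.

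From here I would mimic the counting at the end of the proof of Proposition~\ref{prop:restrict_W}: parameterize $\ab''$ by $(a_{12},a_{23},a_{34})$ via Lemma~\ref{lem:dependent_aij}, apply \eqref{eq:sum_congruence} to the two resulting congruences, and bound the count by $W|B_{12}|\bigl(T_2^{-1/3}|B_{23}|/|a_4|+1\bigr)\bigl(W|B_{34}|/|a_1|+1\bigr)$. The two ``$+1$'' terms get absorbed via \eqref{eq:ai_bound_TB_2}, which supplies $|a_1|\le T_2^{-1/3}|B_{34}|$ and $|a_4|\le T_2^{-1/3}|B_{23}|$: this is precisely where \eqref{eq:ai_bound_TB} takes over the role that \eqref{eq:symmetry} played before. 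After simplification using $|B_{12}B_{23}B_{34}|/|a_1a_4|=B/|a_1a_2a_3a_4|$, the per-$\ab'$ bound becomes $\ll T_2^{-1/3}W^2 B/|a_1a_2a_3a_4|$, and summing over $\ab'$ with $|a_i|\le B$ (from Lemma~\ref{lem:bound_a'_B}) contributes $\ll T_2^{-1/3}W^2 B(\log B)^4$. For $T_2=(\log B)^{2^{27}}$ and $W=\log B$ this is smaller than the target $B(\log B)^3\log\log B$ by an enormous margin. The only real subtlety I anticipate is the bookkeeping in the initial $S_4$-reduction, but this is combinatorial and parallel to Lemma~\ref{lem:symmetry}; the analytic content of the proof is entirely contained in the renormalization step, which is really what the strength of \eqref{eq:ai_bound_TB} was designed to unlock.
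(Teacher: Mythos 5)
Your proposal is correct and takes essentially the same route as the paper: you deduce from the failure of \eqref{eq:symmetry} and the improved bound \eqref{eq:ai_bound_TB} that $|z_{ij}z_{ik}z_{jk}|<T_2^{-1}$, pigeonhole so that one of the independent renormalized variables is at most $T_2^{-1/3}$, and then rerun the congruence-counting estimate from Proposition~\ref{prop:restrict_W}, using \eqref{eq:ai_bound_TB_2} to absorb the $O(1)$ error terms. The only cosmetic difference is your choice of distinguished index ($l=1$ and $a_{23}$ rather than the paper's $l=4$ and $a_{12}$) and that you track the $W^2$ factor explicitly; both yield a bound of order $T_2^{-1/3}W^2 B(\log B)^4$, which is negligible.
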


\begin{proof}
    We need to discard solutions where \eqref{eq:symmetry} is violated. By symmetry, we may assume that $|a_{12}a_{13}a_{23}|<|a_1a_2a_3|$. Hence, we have
    \begin{equation*}
       \frac{|a_{12}a_{13}a_{23}|}{|B_{12}B_{23}B_{13}|}<\frac{|a_1a_2a_3|}{|B_{12}B_{23}B_{13}|}=\frac{|a_1^2a_2^2a_3^2|}{|Ba_4}| \le \frac{1}{T_2} 
    \end{equation*}
    by \eqref{eq:ai_bound_TB}. By symmetry, we may suppose that $|a_{12}|<T_2^{-1/3} |B_{12}|$. But now, arguing as in the proof of Proposition~\ref{prop:restrict_W}, we can bound the total number of such solutions by $T_2^{-1/3} B(\log B)^4$, which is satisfactory.
\end{proof}

\section{Estimation of the main contribution}

\subsection{Möbius inversion}

We begin by removing the coprimality conditions involving the variables $a_{ij}$. Via Möbius inversion, we obtain a lattice point counting problem, involving congruences modulo the newly introduced Möbius variables $\db=(d_1,d_2,d_3,d_4),\ \eb=(e_1,e_2,e_3,e_4) \in \ZZ_{>0}^4$.

To describe our first result, consider the conditions
\begin{align}
    &(d_i;a_j)=1,\label{eq:db}\\
    &e_i \mid a_i,\label{eq:eb}\\
    &f_{ij}:=[d_i;d_j]e_ke_l \mid a_{ij}.\label{eq:moebius}
\end{align}
It will also be helpful to use the notation $\mu(\db)=\mu(d_1)\mu(d_2)\mu(d_3)\mu(d_4)$, the analogous $\mu(\eb)$ as well as $\mu(\db,\eb)=\mu(\db)\mu(\eb)$.
Since we keep the coprimality conditions concerning only the $a_i$ untouched for the moment, it will be convenient to write 
\begin{equation*} 
    \theta_0(\ab')=\begin{cases} 1, & (a_i;a_j)=1,\\0, & \text{else}. \end{cases}
\end{equation*}

Consider the set $\Gs=\Gs(\ab',\db,\eb) \subset \ZZ^3$ of all $(a_{12},a_{23},a_{34})$ for which all six coordinates $a_{ij}$ are integral and satisfy \eqref{eq:moebius}. Moreover, let 
\begin{equation*}
    S^{(W)}(\ab',B) = \{(a_{12},a_{23},a_{34}) \in \RR_{\ne 0}^3 : \eqref{eq:height_on_torsor},\ \eqref{eq:restriction_aij},\ a_{ij} \ne 0\}
\end{equation*}
(using \eqref{eq:dependent_aij} for $a_{13},a_{24},a_{14}$ in all three conditions).

\begin{prop}\label{prop:moebius}
    For $\ab' \in \ZZnz^4$ with $\theta_0(\ab')=1$, we have
    \begin{equation*}
        |A(W,\ab',B)|=\sums{\db: \eqref{eq:db}\\\eb: \eqref{eq:eb}} \mu(\db,\eb) |\Gs(\ab',\db,\eb) \cap S^{(W)}(\ab',B)|.
    \end{equation*}
\end{prop}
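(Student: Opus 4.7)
\medskip
\noindent\textit{Proof plan.} The plan is to interchange the order of summation on the right-hand side, reducing the claim to a pointwise M\"obius identity that I then verify prime by prime using the torsor equations. The interchange is legitimate because for each $\ab'$ only finitely many $(\db,\eb)$ give a nonzero contribution: \eqref{eq:moebius} together with the bound $|a_{ij}|\le W|B_{ij}|$ from \eqref{eq:restriction_aij} forces $f_{ij}$ to be bounded. After swapping, both sides of the proposition become sums over $\ab'' \in \ZZnz^6$ satisfying \eqref{eq:height_on_torsor}, \eqref{eq:torsor}, \eqref{eq:restriction_aij}, and it suffices to show that for every such $\ab''$ (with $\theta_0(\ab')=1$), the M\"obius sum $\sum_{(\db,\eb)} \mu(\db)\mu(\eb)$ over $(\db,\eb)$ satisfying \eqref{eq:db}, \eqref{eq:eb}, \eqref{eq:moebius} equals $1$ if \eqref{eq:coprimality} holds and $0$ otherwise. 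This identity is multiplicative over primes, so it is enough to verify it locally at each prime $p$. Since $\theta_0(\ab')=1$, at each $p$ at most one $a_i$ has $v_p(a_i)>0$, splitting the analysis into two cases.

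First, suppose $v_p(a_i)=0$ for all $i$. Condition \eqref{eq:eb} forces $v_p(e_i)=0$, while \eqref{eq:db} leaves $\delta_i:=v_p(d_i)\in\{0,1\}$ free. The local form of \eqref{eq:moebius} reads $\max(\delta_i,\delta_j)\le v_p(a_{ij})$, so the four $\delta_i$ decouple and the local M\"obius sum factorizes as $\prod_{i}[\min_{j\ne i} v_p(a_{ij})=0]$. The first equation of \eqref{eq:torsor} has coefficients $a_2,a_3,a_4$ that are $p$-adic units, so reducing mod $p$ rules out the configuration in which exactly one of $v_p(a_{12}),v_p(a_{13}),v_p(a_{14})$ vanishes. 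The same reasoning applied to the second, third, and fourth torsor equations handles the three other clusters. Hence in each cluster $i$ the number of $a_{ij}$'s with $v_p=0$ lies in $\{0,2,3\}$, so ``some $v_p(a_{ij})=0$'' coincides with ``at least two do'', which is precisely the cluster coprimality $(a_{ij};a_{ik})=1$ at $p$; the coprimalities $(a_i;a_{jk})=1$ are vacuous in this case.

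Second, suppose $v_p(a_1)>0$ and $v_p(a_i)=0$ for $i\ne 1$. Conditions \eqref{eq:db} and \eqref{eq:eb} force $v_p(d_i)=v_p(e_i)=0$ for $i\ne 1$, leaving only $\delta_1,\epsilon_1\in\{0,1\}$ (where $\epsilon_1:=v_p(e_1)$) with independent constraints, so the local M\"obius sum splits as
\begin{equation*}
    \bigl[\min_{j\ne 1}v_p(a_{1j})=0\bigr]\cdot\bigl[\min_{\{j,k\}\not\ni 1}v_p(a_{jk})=0\bigr].
\end{equation*}
The first factor matches the cluster-$1$ coprimality at $p$ exactly as before, via the first torsor equation in \eqref{eq:torsor} (which still has $p$-unit coefficients). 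For the second factor, the second, third, and fourth equations of \eqref{eq:torsor} each contain a term divisible by $a_1$, hence by $p$; reducing mod $p$, each such equation equates the $v_p$-vanishing status of two of the variables $a_{23},a_{24},a_{34}$, forcing the three to all have $v_p=0$ or all $v_p\ge 1$. Thus the second factor matches the coprimality $(a_1;a_{jk})=1$; the coprimalities for clusters $\ne 1$ are automatic at $p$ since each such cluster contains at most one variable ($a_{1k}$) that is not a $p$-unit. The main obstacle is this careful prime-by-prime case analysis: the crucial point is that the torsor equations forbid exactly those valuation patterns that would make the M\"obius-sum indicator differ from the coprimality indicator.
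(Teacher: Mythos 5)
Your argument is correct, and it takes a genuinely different route from the paper. The paper runs the M\"obius inversion \emph{forward} in two stages: it first introduces $\db$ to detect the conditions $(a_{ij};a_{ik})=1$, using the cluster-$i$ torsor equation (whose coefficients are coprime to any common divisor $d_i$ of $a_{ij}$ and $a_{ik}$) to upgrade $d_i$ to a divisor of $a_{il}$ as well and to restrict to $(d_i;a_j)=1$; it then introduces $\eb$ to detect $(a_i;a_{jk})=1$ by an analogous torsor argument; and finally it combines the resulting divisibilities into the single condition $f_{ij}\mid a_{ij}$, using pairwise coprimality of $d_id_j$, $e_k$, $e_l$. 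You instead start from the claimed identity, interchange the summation (correctly justified: for fixed $\ab'$ and each $\ab''$ there are finitely many admissible $(\db,\eb)$, and the whole double sum is finite), and reduce the proposition to a multiplicative pointwise identity, which you verify prime by prime. The decoupling of the local M\"obius sum into per-index factors, and the use of the torsor equations to forbid the ``exactly one $p$-unit per cluster'' pattern so that ``some $a_{ij}$ in the cluster is a $p$-unit'' coincides with ``at least two are,'' recovers exactly the information the paper extracts. The paper's approach is constructive and discovers the multiplier $f_{ij}$; yours presupposes the answer and checks the resulting local factors. One small imprecision: your concluding remark that the cluster-$i$ coprimalities for $i\ne 1$ are ``automatic at $p$'' in the second case is not unconditional, since with $v_p(a_1)>0$ one can a priori have $v_p(a_{23})>0$ and $v_p(a_{24})>0$ simultaneously. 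What saves the argument is the torsor tie you establish just before, which shows $a_{23},a_{24},a_{34}$ are all $p$-units or all non-units; in the latter alternative both your second M\"obius factor and the $(a_1;a_{jk})$-coprimality indicator already vanish, so the cluster-$i$ coprimalities for $i\ne1$ never change the value of either side. The substance is sound; only the word ``automatic'' is doing slightly more work than it should.
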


\begin{proof}
   The conditions $(a_i;a_j)=1$ from \eqref{eq:coprimality} are encoded in $\theta_0(\ab')$. We use M\"obius inversion to remove the other coprimality conditions, starting with $(a_{ij};a_{ik})=1$. Any divisor of $d_i$ of $a_{ij}$ and $a_{ik}$ is coprime to $a_l$ (since $(a_{ij};a_l)=1$), hence $d_i$ must also divide $a_{il}$ by the $i$-th torsor equation \eqref{eq:torsor}. Furthermore, $d_i \mid a_{ij}$ implies $(d_i;a_j)|(a_{ij};a_j)=1$, hence we may restrict the M\"obius inversion to \eqref{eq:db}. Writing $A=A(W,\ab',B)$, this proves
    \begin{equation*}
        |A| = \sum_{\db:\eqref{eq:db}} \mu(\db) |\{\ab'' \in \ZZnz^6 : \eqref{eq:height_on_torsor}, \eqref{eq:torsor}, \eqref{eq:restriction_aij},\ (a_i;a_{jk})=1,\ d_i \mid a_{ij},a_{ik},a_{il}\}.
    \end{equation*}

    Another M\"obius inversion removes the conditions $(a_i;a_{jk})=1$. A divisor $e_i$ of $a_i$ and $a_{jk}$ is coprime to $a_l$ (since $(a_i;a_l)=1$), hence $e_i$ must also divide $a_{jl}$ by the $j$-th torsor equation; analogously, $e_i$ must divide $a_{kl}$. Hence
    \begin{equation*}
        |A| = \sums{\db:\eqref{eq:db}\\\eb:\eqref{eq:eb}} \mu(\db,\eb) |\{\ab'' \in \ZZnz^6 : \eqref{eq:height_on_torsor}, \eqref{eq:torsor}, \eqref{eq:restriction_aij},\ d_i \mid a_{ij},a_{ik},a_{il},\ e_i \mid a_{jk},a_{jl},a_{kl}\}.
    \end{equation*}
    Finally, $a_{ij}$ being divisible by $d_i,d_j,e_k,e_l$ is equivalent to \eqref{eq:moebius} since $e_k\mid a_k$ and $e_l \mid a_l$ while $(d_id_j;a_ka_l)=(a_k;a_l)=1$.
\end{proof}

The next step is to describe the set $\Gs$ in terms of congruence conditions on $a_{12}, a_{23}$, and $a_{34}$. To this end, let
\begin{align*}
    b_{12}&:=e_3e_4[d_1;d_2;(d_3;d_4)],\\
    b_{23}&:=e_1[d_2;d_3;d_4],\\
    b_{34}&:=e_2[d_1;d_3;d_4].
\end{align*}

\begin{lemma}\label{lem:lattice}
    Fix $\ab' \in \ZZnz^4$ with $\theta_0(\ab')=1$, and $\db,\eb \in \ZZ_{>0}^4$ with \eqref{eq:db} and \eqref{eq:eb}.
    
    The set $\Gs=\Gs(\ab',\db,\eb)$ is a complete lattice, described by congruence conditions
    \begin{equation*}
        \congr{a_{12}}{0}{b_{12}},\quad \congr{a_{23}}{\gamma_{23}}{a_4b_{23}},\quad \congr{a_{34}}{\gamma_{34}}{a_1b_{34}}
    \end{equation*}
    for some $\gamma_{23}$ only depending on $\ab', \db,\eb$ and $a_{12}$, and $\gamma_{34}$ only depending on $\ab',\db,\eb$ and $a_{12},a_{23}$.
\end{lemma}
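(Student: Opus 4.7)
My plan is to realize $\Gs$ as a finite-index sublattice of $\ZZ^3$ and then read off a triangular basis. Each of the six conditions $f_{ij}\mid a_{ij}$ in \eqref{eq:moebius} becomes, after substituting \eqref{eq:dependent_aij} for $a_{13},a_{24},a_{14}$, a $\ZZ$-linear congruence on $(a_{12},a_{23},a_{34})$, with moduli $f_{12}, f_{23}, f_{34}, a_1 f_{13}, a_4 f_{24}, a_1 a_4 f_{14}$; combined with the two $\ZZ$-linear integrality conditions from Lemma~\ref{lem:dependent_aij}, this exhibits $\Gs$ as a sublattice of $\ZZ^3$ of finite (hence complete) index. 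What remains is to compute the three successive moduli along the triangular basis.

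For the modulus $b_{12}$ on $a_{12}$: the direct condition $f_{12}=[d_1;d_2]e_3e_4\mid a_{12}$ accounts for everything in $b_{12}$ except the factor $(d_3;d_4)$. This additional divisibility is obtained from the first torsor equation, rewritten as $a_2 a_{12}=a_3 a_{13}-a_4 a_{14}$: since $d_3\mid a_{13}$ and $d_4\mid a_{14}$, we get $(d_3;d_4)\mid a_2 a_{12}$, and $(d_3;a_2)=1$ gives $((d_3;d_4);a_2)=1$, so $(d_3;d_4)\mid a_{12}$. Combined with $f_{12}\mid a_{12}$, this yields $b_{12}\mid a_{12}$. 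The congruence on $a_{23}$ modulo $a_4b_{23}$ (given $a_{12}$) arises from combining $f_{23}\mid a_{23}$ with the condition $a_4f_{24}\mid a_3 a_{23}-a_1 a_{12}$ coming from $f_{24}\mid a_{24}$; isolating the $a_4$-part (using $(a_3;a_4)=1$) and the $e_3$-factor shared between $a_3$ and $f_{24}$ yields a single residue class for $a_{23}$, whose representative $\gamma_{23}$ depends $\ZZ$-linearly on $a_{12}$. The modulus $a_1 b_{34}$ for $a_{34}$ is derived analogously from $f_{34}\mid a_{34}$, $a_1 f_{13}\mid a_2 a_{23}-a_4 a_{34}$, and $a_1 a_4 f_{14}\mid a_2 a_3 a_{23}-a_3 a_4 a_{34}-a_1 a_2 a_{12}$.

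The main obstacle lies in the bookkeeping: the moduli $f_{ij}$ share factors with the $a_i$ through $e_i\mid a_i$ (and $(d_i;a_i)$ need not be $1$), so extracting the precise forms $b_{12}$, $a_4 b_{23}$, $a_1 b_{34}$ requires a careful local analysis. I would decompose via CRT and work one prime $p$ at a time, writing $\alpha_i, \delta_i, \epsilon_i$ for the $p$-adic valuations of $a_i, d_i, e_i$; the coprimality \eqref{eq:coprimality} ensures that at most one $\alpha_i$ is nonzero, drastically reducing the case analysis. Verifying that the system of valuation inequalities produced by \eqref{eq:moebius} together with integrality collapses to the three successive conditions $v_p(a_{12})\ge v_p(b_{12})$, $v_p(a_{23}-\gamma_{23})\ge v_p(a_4 b_{23})$, and $v_p(a_{34}-\gamma_{34})\ge v_p(a_1 b_{34})$ is then a routine case check at each prime; in particular, the somewhat unexpected factor $\min(\delta_3,\delta_4)$ appearing in $b_{12}$ is forced exactly at this local level. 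Once both inclusions are in place, the functional dependence of $\gamma_{23}$ on $a_{12}$ alone and of $\gamma_{34}$ on $(a_{12},a_{23})$ alone is automatic from the triangular structure of the basis.
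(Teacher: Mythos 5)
Your strategy — realize $\Gs$ as a finite-index sublattice of $\ZZ^3$ cut out by the integrality conditions of Lemma~\ref{lem:dependent_aij} and the six divisibilities \eqref{eq:moebius}, then read off a triangular basis — is the same as the paper's. One pleasant variation: you obtain the factor $(d_3;d_4)$ in $b_{12}$ directly from the first torsor equation together with $d_3\mid a_{13}$, $d_4\mid a_{14}$ and $(d_3;a_2)=1$, whereas the paper extracts it as a compatibility condition propagated back from the elimination of $a_{34}$ and then $a_{23}$. Doing the bookkeeping one prime at a time via CRT is also a legitimate alternative to the paper's global gcd/lcm arithmetic.

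The part you should not dismiss as a ``routine case check,'' and which the paper treats explicitly, is the \emph{sufficiency} of the triangular congruences. The combined congruence on $a_{23}$ modulo $a_4b_{23}$ exists only if the conditions coming from $f_{23}$ and $f_{24}$ are compatible, a constraint on $a_{12}$; one must check this is implied by $b_{12}\mid a_{12}$. Likewise the $a_{34}$-congruence modulo $a_1b_{34}$ requires a compatibility condition on $a_{23}$ (the paper identifies it as $[d_3;(d_1;d_4)]\mid a_{23}$) that must follow from the $a_{23}$-congruence already in force. Finally, the sixth divisibility $a_1a_4f_{14}\mid a_2a_3a_{23}-a_3a_4a_{34}-a_1a_2a_{12}$ must be shown to be implied by the other five (the paper records this as an explicit observation). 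Your derivations as written establish only the necessary direction (membership in $\Gs$ forces the three congruences); these checks are what guarantee that the triangular congruences cut out exactly $\Gs$ and that $\Gs$ is indeed a \emph{complete} lattice. They are not hard — the lcm computations for $a_4b_{23}$ and $a_1b_{34}$ do come out right using the coprimalities from \eqref{eq:db}, \eqref{eq:eb} and $\theta_0(\ab')=1$ — but they are the substance of the lemma and should be carried out rather than deferred.
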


\begin{proof}
    It is clear by linearity of all given conditions that $\Gs$ is a lattice. Our job is to transform the six conditions from \eqref{eq:moebius} in a form only depending on $a_{12},a_{23},a_{34}$.

    In the first step, we can combine \eqref{eq:moebius} for the dependent coordinates $a_{ij}$ with Lemma~\ref{lem:dependent_aij} to describe $\Gs$ as the set of $(a_{12},a_{23},a_{34}) \in \ZZ^3$ satisfying $f_{12} \mid a_{12}, f_{23} \mid a_{23}, f_{34} \mid a_{34}$ as well as
    \begin{equation*}
     a_4f_{24} \mid a_3a_{23}-a_1a_{12}, \quad a_1f_{13} \mid a_1a_{34}-a_2a_{23}, 
    \end{equation*}
    the latter being equivalent to
    \begin{equation*}
        a_4[d_2;d_4] \mid a_3a_{23}-a_1a_{12}, \quad a_1[d_1;d_3] \mid a_1a_{34}-a_2a_{23},
    \end{equation*}
    since the divisibility by the respective $e_i$ is automatic and they are coprime to the remaining part of the modulus. Note that now the moduli are coprime to the coefficients $a_3$ and $a_1$, respectively, so that these are really congruence conditions for $a_{23}$ and $a_{34}$, respectively.
    
    Here, we note that the final condition $a_1a_4f_{14} \mid a_2a_3a_{23}-a_3a_4a_{34}-a_1a_2a_{12}$ is automatically satisfied under these circumstances, as can be easily checked.

    The next step is to combine these conditions and check their compatibility. We have two congruences for $a_{34}$, one modulo $f_{34}=[d_3;d_4]e_1e_2$ and one modulo $a_1[d_1;d_3]$.
    The greatest common divisor of the two moduli is $e_1 \cdot [d_3;(d_1;d_4)]$. So the two conditions are compatible if and only if
    \begin{equation*}
        e_1 \cdot [d_3;(d_1;d_4)] \mid a_2a_{23},
    \end{equation*}
    which is clearly equivalent to $[d_3;(d_1;d_4)] \mid a_{23}$.
    In that case, the coordinate $a_{34}$ is determined by a congruence condition modulo the least common multiple of the two moduli, which is precisely $b_{34}$.

    Next, we can rewrite the conditions for $a_{23}$ as
    \begin{align*}
        [d_2;d_3;(d_1;d_4)]e_1e_4 &\mid a_{23},\\
        a_4[d_2;d_4] &\mid a_3a_{23}-a_1a_{12}.
    \end{align*}
    The greatest common divisor of the two moduli is $e_4 [d_2;(d_1;d_4);(d_3;d_4)]$. So the two conditions are compatible if and only if
    \begin{equation*}
        e_4 [d_2;(d_1;d_4);(d_3;d_4)] \mid a_1a_{12},
    \end{equation*}
    which is clearly equivalent to $(d_3;d_4) \mid a_{12}$.
    In that case, the coordinate $a_{23}$ is determined by a congruence condition modulo the least common multiple of the two moduli, which is precisely $b_{23}$.

    Finally, we can combine the two divisibilities for $a_{12}$ to one by the least common multiple of the two moduli, which is precisely $b_{12}$.
    
    The lattice described by these congruence conditions is clearly complete.
\end{proof}

\subsection{Large Möbius variables} \label{sec:large_moebius}

We now restrict the variables $\db,\eb$ to moderate sizes, using a lifting argument as in \cite[\S 5]{Bre02}. This allows us to sum the error terms in the later steps of the argument.

The first ingredient here is an upper bound on the number of solutions without the coprimality conditions. Since we want to improve the error term from \cite{Bre02}, we need a version of this also involving a sum over divisor functions.

\begin{lemma}\label{lem:upperbound}
    We have
    \begin{equation*}
        \sums{(\ab',\ab'') \in \ZZnz^{10}\\\eqref{eq:height_on_torsor},\eqref{eq:torsor}} (\tau(a_1)\tau(a_2)\tau(a_3)\tau(a_4))^5 \ll B(\log B)^{2^{21}}.
    \end{equation*}
\end{lemma}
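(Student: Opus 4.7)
Let $S$ denote the left-hand side of the claimed inequality. The plan is to fix $\ab'$, bound the number $N(\ab', B)$ of tuples $\ab'' \in \ZZnz^6$ satisfying \eqref{eq:height_on_torsor} and \eqref{eq:torsor}, and then sum over $\ab'$ against the divisor weight $\prod_{i=1}^4 \tau(a_i)^5$. The target exponent $2^{21}$ is so generous that a rather crude analysis suffices; the true order of magnitude of $S$ is only $B (\log B)^{O(1)}$ with a much smaller exponent.

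As a preliminary step, from $|a_{ij} a_j a_{jk} a_k a_{kl}| \le B$ together with $|a_{ij}|, |a_{jk}|, |a_{kl}| \ge 1$ we deduce $|a_j a_k| \le B$ for any distinct $j, k$, and hence $|a_i| \le B$ for each $i$. Next, for fixed $\ab' \in \ZZnz^4$ with $(a_1; a_4) = 1$ (the remaining cases being handled symmetrically by a permutation of $(a_1, \ldots, a_4)$ that preserves the weight and forces coprimality of some other pair, or else producing no integer solutions by Lemma~\ref{lem:dependent_aij}), I would use Lemma~\ref{lem:dependent_aij} to parameterize $\ab''$ by $(a_{12}, a_{23}, a_{34}) \in \ZZ^3$ subject to congruences modulo $a_4$ and $a_1$. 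The triple-product bound $|a_{12} a_{23} a_{34}| \le B/|a_2 a_3|$ from the height condition, combined with \eqref{eq:sum_congruence} applied iteratively as in the proof of Proposition~\ref{prop:restrict_W}, should yield
\begin{equation*}
N(\ab', B) \ll \frac{B (\log B)^{O(1)}}{|a_1 a_2 a_3 a_4|};
\end{equation*}
the $+O(1)$ boundary terms from \eqref{eq:sum_congruence} are absorbed by the additional individual bounds $|a_{ij}| \le B/|a_i a_j|$ coming from the other height inequalities (used to ensure that no single $|a_i|$ can be very small while leaving the boundary contributions uncontrolled).

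Summing over $\ab'$ and using $\tau(n)^5 \le \tau_{32}(n)$ together with the classical estimate $\sum_{1 \le |n| \le B} \tau_{32}(n)/|n| \ll (\log B)^{32}$ for each of the four variables gives
\begin{equation*}
S \ll B (\log B)^{O(1)} \prod_{i=1}^4 \sum_{1 \le |a_i| \le B} \frac{\tau(a_i)^5}{|a_i|} \ll B (\log B)^{O(1)},
\end{equation*}
easily within $B (\log B)^{2^{21}}$. The main obstacle is securing the uniform-in-$\ab'$ estimate on $N(\ab', B)$: one must carefully organize the $+O(1)$ boundary contributions from the congruence counting and combine them with several of the twelve available height inequalities to rule out scenarios in which some $|a_i|$ are very small while others are large. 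However, the extreme slackness in the target exponent shows that no delicate argument is needed, and one can afford substantially crude losses at each step.
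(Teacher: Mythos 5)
Your overall strategy — fix $\ab'$, count $\ab''$, then sum against the divisor weight — is reasonable, but the central uniform estimate you assert is false, and this is precisely the difficulty the paper's proof is designed to navigate.

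Concretely, your claim that
\begin{equation*}
N(\ab', B) \ll \frac{B(\log B)^{O(1)}}{|a_1 a_2 a_3 a_4|}
\end{equation*}
holds uniformly for $\ab' \in \ZZnz^4$ with $|a_i| \le B$ fails when the $a_i$ share a large common factor. Take $a_1 = a_2 = a_3 = a_4 = N$. Then the formulas \eqref{eq:dependent_aij} become $a_{13} = a_{23} - a_{34}$, $a_{24} = a_{23} - a_{12}$, $a_{14} = a_{23} - a_{34} - a_{12}$ with \emph{no} congruence constraints at all, and every height monomial is $N^2 |a_{ij}a_{jk}a_{kl}|$. So the height condition reduces to $\max |a_{ij}a_{jk}a_{kl}| \le B/N^2$, and the number of admissible $(a_{12},a_{23},a_{34})$ is of order $(B/N^2)(\log B)^2$ — a factor of $N^2$ larger than the $B(\log B)^{O(1)}/N^4$ your claimed bound would give. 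This also shows that your reduction to the case $(a_1;a_4)=1$ cannot be salvaged by a permutation (no pair is coprime here), and that the reading of Lemma~\ref{lem:dependent_aij} as ``otherwise no integer solutions'' is incorrect once the extra degrees of freedom from the common factor are taken into account.

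A related error is the assertion of ``individual bounds $|a_{ij}| \le B/|a_ia_j|$.'' No height monomial contains $a_{ij}$, $a_i$, and $a_j$ simultaneously. The monomials containing $a_{ij}$ have the form $a_{ij}a_ja_{jk}a_ka_{kl}$, giving $|a_{ij}| \le B/|a_ja_k|$ for $k \notin \{i,j\}$, which is a different (and weaker, for the purpose you want) family of bounds. Because of this, the hand-waved absorption of the $+O(1)$ boundary terms from \eqref{eq:sum_congruence} is not justified. The paper instead (i) invokes the symmetry device of Lemma~\ref{lem:symmetry} at the cost of a harmless divisor-weight swap, so that \eqref{eq:symmetry} makes the congruence moduli $a_1, a_4$ small relative to the summation ranges, (ii) uses the dyadic cusp decomposition from Proposition~\ref{prop:restrict_W} to gain a factor $W^{-1}$, and (iii) explicitly tracks the extra factors $(a_3;a_4)(a_1;a_4)$ arising because, without coprimality, $a_{23}$ and $a_{34}$ are only constrained modulo $a_4/(a_3;a_4)$ and $a_1/(a_1;a_4)$, before summing these via divisor functions of $a_4$. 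All three of these devices are missing from your argument, and at least the first and third are essential, not just convenient.
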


\begin{proof}
    By Hölder's inequality and symmetry, it suffices to prove that
    \begin{equation*}
        \sums{(\ab',\ab'') \in \ZZnz^{10}\\\eqref{eq:height_on_torsor},\eqref{eq:torsor}} \tau(a_2)^{20} \ll B(\log B)^{2^{21}}.
    \end{equation*}
    Moreover, by the symmetry argument from Lemma \ref{lem:symmetry}, it suffices to prove both this and the same bound with $\tau(a_{12})^{20}$ under the symmetry assumption \eqref{eq:symmetry}.

    Finally, by the argument from Proposition \ref{prop:restrict_W}, it suffices to prove a bound of $B(\log B)^{2^{21}}/W$ in either of the two situations, for either of the two cases
    \begin{equation} \label{eq:case_e2}
        z_{12},z_{23},z_{24} \ll \frac{1}{W^2},\quad z_{13},z_{14},z_{34} \ll W
    \end{equation}
    and
    \begin{equation} \label{eq:case_e3}
        z_{13},z_{23},z_{34} \ll \frac{1}{W^2},\quad z_{12},z_{14},z_{24} \ll W
    \end{equation}
    with some $W \ge 1$ (note that the choice $W=1$ takes care of the case where \eqref{eq:restriction_aij} holds).

    Let us first consider the situation where we are summing $\tau(a_2)^{20}$. If \eqref{eq:case_e2} holds, then by the same line of argument as in the proof of Proposition~\ref{prop:restrict_W}, we can bound the expression by
    \begin{equation*} 
    \sum_{\ab'} \frac{B}{W^2|a_1a_2a_3a_4|} \cdot (a_3;a_4) \cdot (a_1;a_4) \cdot \tau(a_2)^{20},
    \end{equation*}
    the only difference being that in general $a_{23}$ is only determined modulo $\frac{a_4}{(a_3;a_4)}$ and $a_{34}$ is determined modulo $\frac{a_1}{(a_1;a_4)}$.
    
    If \eqref{eq:case_e3} holds, we instead obtain the bounds $a_{12} \ll |B_{12}|W$, $a_{23} \ll |B_{23}|/W^2$, $a_{34} \ll |B_{34}|/W^2$ together with $a_1 \ll |B_{34}|/W$ and $a_4 \ll |B_{23}|/W$ so that a similar computation yields the slightly weaker bound
    \begin{equation*} 
    \sum_{\ab'} \frac{B}{W|a_1a_2a_3a_4|} \cdot (a_3;a_4) \cdot (a_1;a_4) \cdot \tau(a_2)^{20},
    \end{equation*}
    which therefore holds in both cases.
    
    Writing $d=(a_1;a_4)$, $e=(a_3;a_4)$ and $a_1'=a_1/d$, $a_3'=a_3/e$, the expression can now be bounded as
    \begin{align*}
        &\ll\frac{B}{W} \sum_{a_2=1}^B \frac{\tau(a_2)^{20}}{a_2} \cdot \sum_{a_4=1}^B \sums{d \mid a_4\\e \mid a_4} \sum_{a_1'=1}^B \sum_{a_3'=1}^B\frac{1}{a_1'a_3'a_4}\\
    &\ll \frac{B(\log B)^2}{W} \left(\sum_{a_2=1}^B \frac{\tau(a_2)^{20}}{a_2}\right) \sum_{a_4=1}^B \frac{\tau(a_4)^2}{a_4}\\
    &\ll \frac{B(\log B)^{2+2^2+2^{20}}}{W},
    \end{align*}
    using familiar estimates for moments of the divisor function \cite[(2.31)]{MV}.

    On the other hand, when we are summing $\tau(a_{12})^{20}$ instead, the sum over $a_{12}$ inside introduces an additional factor of $(\log B)^{2^{20}-1}$, whereas the sum over $a_2$ saves the same factor, thus leading to the same bound.
\end{proof}

\begin{prop}\label{prop:restrict_moebius}
    We have
    \begin{align*}
        |\Mover(B)|={}&\sums{\ab' \in \ZZnz^4\\\eqref{eq:ai_bound_TB}} \theta_0(\ab') \sums{\db: \eqref{eq:db},\ |d_i| \le T_1\\\eb: \eqref{eq:eb},\ |e_i| \le T_1} 
        \mu(\db,\eb)|\Gs(\ab',\db,\eb) \cap S^{(W)}(\ab',B)|\\
        &+O(B(\log B)^3 \log\log B).
    \end{align*}
\end{prop}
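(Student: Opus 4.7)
The plan is to start from the identity obtained by combining Proposition~\ref{prop:symmetry_removed} and Proposition~\ref{prop:moebius}, and split the $(\db,\eb)$-sum into the part with all $|d_i|,|e_i|\le T_1$ (the desired truncated sum) and a remainder. It then suffices to show that the remainder is $O(B(\log B)^3\log\log B)$. By the symmetry of the eight M\"obius variables, I would treat only the case $d_1>T_1$; the cases $d_2,d_3,d_4>T_1$ are identical and the cases $e_i>T_1$ are analogous with the obvious modifications.

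First, drop $\mu(\db,\eb)$ by the triangle inequality and interchange the order of summation: sum first over $(\ab',\ab'')\in\ZZnz^{10}$ satisfying \eqref{eq:height_on_torsor} and \eqref{eq:torsor}, weighted by the number of admissible $(\db,\eb)$ with $d_1>T_1$. The divisibilities extracted in the proof of Proposition~\ref{prop:moebius} (namely $d_i\mid a_{ij},a_{ik},a_{il}$ and $e_i\mid a_i,a_{jk},a_{jl},a_{kl}$) show that, for fixed $(\ab',\ab'')$, the count of admissible $(\db,\eb)$ is bounded by a product of divisor functions $\tau(a_i),\tau(a_{ij})$, which in turn can be absorbed by the weight $(\tau(a_1)\tau(a_2)\tau(a_3)\tau(a_4))^5$ of Lemma~\ref{lem:upperbound} after using the torsor equations to bound each $\tau(a_{ij})$ by a product of $\tau(a_k)$-like factors.

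The decisive saving from $d_1>T_1$ comes from the \emph{lifting} argument of \cite[\S 5]{Bre02}: substitute $a_{1j}=d_1\tilde a_{1j}$ for $j\in\{2,3,4\}$, rewrite the height conditions \eqref{eq:height_on_torsor} and the torsor equations \eqref{eq:torsor} in the lifted variables, and observe that sufficiently many of the thirty constraints in \eqref{eq:height_on_torsor} involve an index~$1$ that their product forces an effective height bound $B/d_1^{\,c}$ with some $c>0$ for the lifted system. Applying Lemma~\ref{lem:upperbound} (or its variant without the coprimality restrictions) to the rescaled counting problem and then summing $d_1$ over $(T_1,\infty)$ yields a tail of the form $B(\log B)^{2^{21}}T_1^{-c'}$ for some $c'>0$. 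With $T_1=(\log B)^{2^{22}}$ and $c'>(2^{21}-3)/2^{22}\approx\tfrac12$, this is negligible against $B(\log B)^3\log\log B$. The case $e_1>T_1$ is handled by the analogous lifting $a_1=e_1\tilde a_1$ together with $a_{jk}=e_1\tilde a_{jk}$ for the three $a_{jk}$ divisible by $e_1$, using that $|a_1|\le B$ forces $|\tilde a_1|\le B/T_1$.

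The main obstacle is the careful bookkeeping in the lifting step: one must verify that enough of the thirty height constraints involve an index on which $d_1$ is lifted so that the effective height parameter contracts with a positive exponent, and that the divisor-weighted upper bound of Lemma~\ref{lem:upperbound} continues to hold after the substitution with essentially the same exponent $2^{21}$. The generous choice of $T_1$ as an enormous power of $\log B$, together with the exponent $2^{21}$ of Lemma~\ref{lem:upperbound}, are tuned precisely so that this argument closes with room to spare, and the bound $(\tau(a_1)\tau(a_2)\tau(a_3)\tau(a_4))^5$ is made strong enough to withstand the loss coming from the divisor-function estimate on the count of admissible $(\db,\eb)$.
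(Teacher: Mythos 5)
Your overall strategy (split off the Möbius tail and show it is negligible via a lifting argument in the spirit of \cite[\S 5]{Bre02}, feeding the result into Lemma~\ref{lem:upperbound}) is the right one, but the specific lifting you propose has a genuine gap: the substitution $a_{1j}=d_1\tilde a_{1j}$ for $j\in\{2,3,4\}$, leaving all other coordinates untouched, does \emph{not} produce a new solution of the torsor equations \eqref{eq:torsor}. For instance, the second torsor equation $a_4a_{24}-a_3a_{23}+a_1a_{12}=0$ becomes $a_4a_{24}-a_3a_{23}+d_1a_1\tilde a_{12}=0$, which is not a torsor equation in the tilded variables; the same happens for every equation that mixes the lifted $a_{1j}$ with the unlifted $a_{jk}$. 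Since Lemma~\ref{lem:upperbound} counts solutions of the \emph{system} \eqref{eq:height_on_torsor}, \eqref{eq:torsor}, and not merely integer points of bounded height, you cannot apply it to the lifted tuple, and the argument does not close. Merely observing that "enough of the height constraints involve index $1$" is not sufficient to compensate for this.

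The paper's proof avoids this by a more careful two-step construction. First, the $d_i$ (which need not be pairwise coprime) are replaced by pairwise coprime $g_i\mid d_i$ with $g_1g_2g_3g_4\ge\max d_i$, at the cost of a divisor-function factor $\tau(g_1g_2g_3g_4)^4$ to reconstruct $\db$ from $\gb$. Second, one rescales \emph{all ten} coordinates simultaneously, setting $b_i=a_ig_i/e_i$ and $b_{ij}=a_{ij}/(g_ig_je_ke_l)$; the point is that each torsor equation \eqref{eq:torsor} then factors out a common scalar and is again satisfied by $(\bb',\bb'')$, so Lemma~\ref{lem:upperbound} applies directly, with the height contracted by $e_1^2e_2^2e_3^2e_4^2\,g_1g_2g_3g_4$. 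The $e_i$-case and the $d_i$-case are then handled separately (they are not interchangeable by any symmetry of the counting problem, contrary to what you assert: $e_i$ divides the single variable $a_i$, while $d_i$ divides the three variables $a_{ij},a_{ik},a_{il}$, and the reconstruction bookkeeping differs accordingly). In the $e_1>T_1$ case the map $(\ab',\ab'')\mapsto(\bb',\bb'',\gb,\eb)$ is injective and one sums over $\gb,\eb$; in the $\max d_i>T_1$ case one uses $g_1g_2g_3g_4>T_1$ and reconstructs each $g_i$ as a divisor of $b_ie_i$, contributing $\tau(b_ie_i)$, which is exactly what the $\tau^5$-weight in Lemma~\ref{lem:upperbound} is designed to absorb. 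So the essential idea you are missing is that the lifting must rescale both the $a_i$ and the $a_{ij}$ coherently, using pairwise coprime moduli, so that the torsor relations survive.
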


\begin{proof}
    Combining Proposition~\ref{prop:moebius} with Proposition~\ref{prop:symmetry_removed}, we need to discard tuples $(\ab',\ab'',\db,\eb)$ with $\max d_i>T_1$ or $\max e_i>T_1$. The idea is that such a large common divisor should allow us to construct a solution of significantly smaller height, the number of which we can bound by Lemma \ref{lem:upperbound}. This construction consists of two steps and is essentially reversible, the number of choices being counted by divisor functions, of which we carefully keep track in order to obtain a good error term. It would be more straightforward if we allowed ourselves to bound the divisor function pointwise, but this would result in a worse error term, see \cite[Proposition~5.4]{BD24}.

    The first step is to replace the $d_i$ by numbers $g_i$ which are pairwise coprime. Indeed, for each prime we can choose one of the $d_i$ divisible by it to a maximal power. We can then define $g_i$ to be the product of all these prime powers associated with $d_i$. In this way, we obtain pairwise coprime numbers $g_i \mid d_i$. Moreover, note that $g_1g_2g_3g_4 \ge \max d_i$. Finally, note that from given $g_i$, we can reconstruct the $d_i$ as divisors of $g_1g_2g_3g_4$ up to $\tau(g_1g_2g_3g_4)^4$ many choices.

    In the second step, we use the $e_i$ together with the newly created $g_i$ to obtain from a given solution $(\ab',\ab'')$ a new solution $(\bb',\bb'')$ with $b_i=\frac{a_ig_i}{e_i}$ and $b_{ij}=\frac{a_{ij}}{g_ig_je_ke_l}$. This remains integral by construction (using the coprimality of the $g_i$), and has height
    \begin{equation*}
        \ll \frac{B}{e_1^2e_2^2e_3^2e_4^2g_1g_2g_3g_4}.
    \end{equation*}
    
    Now first suppose that $e_1>T_1$. Since we can reconstruct $(\ab',\ab'')$ uniquely from $(\bb',\bb'',\gb,\eb)$, the total number of solutions can be bounded by
    \begin{equation*} \sum_{\gb,\eb: e_1>T_1} \sum_{\bb,\bb'} (\tau(g_1g_2g_3g_4))^4 \ll \sum_{\gb,\eb: e_1>T_1} \frac{B(\log B)^{2^{21}}}{g_1g_2g_3g_4e_1^2e_2^2e_3^2e_4^2} \ll \frac{B(\log B)^{2^{22}}}{T_1}
    \end{equation*}
    using Lemma \ref{lem:upperbound}, which is satisfactory.

    Next suppose that $\max d_i>T_1$, so that $g_1g_2g_3g_4>T_1$. We then note that from $(\bb',\bb',\eb)$ we can reconstruct the $g_i$ as divisors of $b_ie_i$ up to $\tau(b_ie_i)$ many choices. Therefore, the total number of solutions can be bounded as
    \begin{align*}
    &\sum_{\eb} \sum_{\bb',\bb''} (\tau(b_1e_1)\tau(b_2e_2)\tau(b_3e_3)\tau(b_4e_4))^5\\
    &\ll \sum_{\eb} \prod_i \tau(e_i)^5 \sum_{\bb',\bb''} (\tau(b_1)\tau(b_2)\tau(b_3)\tau(b_4))^5.
    \end{align*}
By the previous lemma and the fact that $g_1g_2g_3g_4>T_1$, this can be bounded as
    \begin{equation*}
        \ll \sum_{\eb} \prod_i \tau(e_i)^5 \frac{B(\log B)^{2^{21}}}{e_1^2e_2^2e_3^2e_4^2T_1} \ll \frac{B(\log B)^{2^{21}}}{T_1},
    \end{equation*}
    which is again satisfactory.
\end{proof}

\subsection{Lattice point counting}

We are now ready for the main counting argument. Due to our preparations, this ends up being relatively simple, iteratively replacing sums by integrals. In particular, we can satisfactorily bound the error terms arising in these approximations using the truncations from Section~\ref{sec:restrictions}, and sum them trivially over the Möbius variables using the truncation introduced in Section~\ref{sec:large_moebius}.

\begin{prop}\label{prop:lattice_point_counting}
    For $\ab' \in \ZZnz^4$ with \eqref{eq:ai_bound_TB} and $\db,\eb \in \ZZ^4_{>0}$ with \eqref{eq:db}, \eqref{eq:eb}, we have
    \begin{equation*}
        |\Gs(\ab',\db,\eb) \cap S^{(W)}(\ab',B)| = \frac{\vol S^{(W)}(\ab',B)}{|a_1a_4|b_{12}b_{23}b_{34}} + O\left(\frac{W^2B}{T_2^{1/3}|a_1 a_2 a_3 a_4|}\right).
    \end{equation*}
\end{prop}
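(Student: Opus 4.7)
The strategy is to iteratively apply the one-dimensional congruence estimate \eqref{eq:sum_congruence} to the three summation variables in the order $a_{34}$ (modulo $a_1 b_{34}$), then $a_{23}$ (modulo $a_4 b_{23}$), then $a_{12}$ (modulo $b_{12}$). The crucial preliminary observation is that $S^{(W)}(\ab', B)$ is a convex polytope in $\mathbb{R}^3$ with a bounded number of facets: by \eqref{eq:dependent_aij}, the dependent coordinates $a_{13}, a_{24}, a_{14}$ are affine-linear functions of $(a_{12}, a_{23}, a_{34})$, so both \eqref{eq:height_on_torsor} and \eqref{eq:restriction_aij} cut out half-spaces. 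Consequently, for fixed $(a_{12}, a_{23})$, the set of admissible $a_{34}$ is a single (possibly empty) bounded interval $I(a_{12}, a_{23})$.

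The innermost step yields $|I(a_{12}, a_{23})|/(a_1 b_{34}) + O(1)$ per pair by \eqref{eq:sum_congruence}. For the middle step, $|I(a_{12}, a_{23})|$ is a piecewise linear function of $a_{23}$ with boundedly many pieces, so a standard Riemann-sum argument replaces the sum over $a_{23} \equiv \gamma_{23} \pmod{a_4 b_{23}}$ by $\frac{1}{a_4 b_{23}} \int |I| \, da_{23}$ at the cost of $O(\max_{a_{23}} |I|/(a_1 b_{34}))$. The outer step handles $a_{12}$ analogously, producing the desired main term $\vol(S^{(W)})/(|a_1 a_4| b_{12} b_{23} b_{34})$ together with the Riemann error $O(\max_{a_{12}} \int |I|\, da_{23}/(a_1 a_4 b_{23} b_{34}))$, while the accumulated $O(1)$ terms from the innermost step contribute $O\bigl(W|B_{12}|/b_{12} \cdot W|B_{23}|/(a_4 b_{23})\bigr)$.

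The final task is to bound the three resulting error terms, which after inserting $\max|I| \ll W|B_{34}|$ and $\int |I|\,da_{23} \ll W^2 |B_{23}B_{34}|$ take the shape
\begin{equation*}
\frac{W^2|B_{12}B_{23}|}{b_{12}a_4b_{23}}, \qquad \frac{W^2|B_{12}B_{34}|}{a_1b_{12}b_{34}}, \qquad \frac{W^2|B_{23}B_{34}|}{a_1 a_4 b_{23}b_{34}}.
\end{equation*}
Each should be dominated by $W^2 B/(T_2^{1/3}|a_1a_2a_3a_4|)$. Substituting \eqref{eq:def_Bij} and using $b_{ij} \ge 1$, each bound reduces to an inequality of the form $|a_ia_ja_k|^2 \le T_2^{-1} B |a_l|$, which is exactly \eqref{eq:ai_bound_TB} with a suitable permutation of the indices (the three errors correspond to $l \in \{2,1,1\}$ respectively, so all three permutations actually needed are available).

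The main delicacy will be the bookkeeping: three distinct error contributions must all collapse to the single target bound, and this works out only because the triple $(a_{12}, a_{23}, a_{34})$ of summation variables forms a chain in which the pair of leftover coordinates in each error complements the missing $a_l$ correctly. A secondary concern is the boundary regime where a modulus exceeds the relevant interval length, producing an additional $+1$ in the congruence count; these are absorbed into the same three error shapes without difficulty, since the main term is positive in the relevant range.
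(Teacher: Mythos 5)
Your approach is essentially the same as the paper's: iteratively replace the sums over $a_{34}$, $a_{23}$, $a_{12}$ by integrals via \eqref{eq:sum_congruence}, collect the error terms, and use \eqref{eq:def_Bij} and \eqref{eq:ai_bound_TB} to show that each is bounded by $W^2 B / (T_2^{1/3}|a_1a_2a_3a_4|)$. Two points need correction, however.

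First, your claim that $S^{(W)}(\ab',B)$ is a convex polytope is false. While \eqref{eq:restriction_aij} does cut out half-spaces (since the dependent $a_{ij}$ are affine-linear in $(a_{12},a_{23},a_{34})$ once $\ab'$ is fixed), the height condition \eqref{eq:height_on_torsor} involves \emph{products} of these affine-linear functions, so each inequality $|a_{ij}a_{jk}a_{kl}|\le B/|a_ja_k|$ is cubic, not linear, in the summation variables. Consequently the $a_{34}$-slices are not single intervals. What the argument actually needs is that the number of intervals (respectively, monotone pieces) is uniformly bounded; this is true because the boundary is a zero set of polynomials of bounded degree, and the paper invokes \cite[Lemma~3.6]{DF14} at exactly this point, together with the partial-summation device of \cite[Lemma~3.1]{D09}. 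Your conclusion (boundedly many pieces, hence an $O(1)$ per slice) is correct, but your justification via convexity is not.

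Second, the bookkeeping of the third error term is off. Using $B_{12}B_{23}B_{34}=B/|a_2a_3|$, the three error shapes are proportional to $\frac{|a_1|}{|B_{34}|}$, $\frac{|a_4|}{|B_{23}|}$, and $\frac{1}{|B_{12}|}$, each times $\frac{W^2B}{|a_1a_2a_3a_4|}$. The first two are bounded by $T_2^{-1/3}$ via \eqref{eq:ai_bound_TB_2} with $l=2$ and $l=1$ respectively, as you say. The third requires $|B_{12}|\ge T_2^{1/3}$, i.e.\ $|a_1a_2|^2\le T_2^{-1}B|a_3a_4|$, which is \emph{not} \eqref{eq:ai_bound_TB} with $l=1$ as you claim, but follows from the instance with $l=4$ (or $l=3$) upon dropping the factor $|a_3|^{-2}\le 1$ (respectively $|a_4|^{-2}\le 1$). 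The instance $l=1$ gives $|a_2a_3a_4|^2\le T_2^{-1}B|a_1|$, which does not imply the required inequality.
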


\begin{proof}
    By Lemma~\ref{lem:lattice}, $|\Gs(\ab',\db,\eb) \cap S^{(W)}(\ab',B)|$ can be written as
    \begin{equation*}
        \sums{0<|a_{12}| \le W|B_{12}|\\b_{12} \mid a_{12}}
        \sums{0<|a_{23}|\le W|B_{23}|\\\congr{a_{23}}{\gamma_{23}}{a_4b_{23}}}
        \sums{0<|a_{34}|\le W|B_{34}|\\\congr{a_{34}}{\gamma_{34}}{a_1b_{34}}} \oneb_{S^{(W)}(\ab',B)}.
    \end{equation*}

    By \eqref{eq:sum_congruence}, the inner sum is
    \begin{equation*}
        \int_{(a_{12},a_{23},a_{34}) \in S^{(W)}} \frac{\ddd a_{34}}{|a_1|b_{34}}  + O(1),
    \end{equation*}
    where the error term depends on the number of components of the domain of integration, which is bounded uniformly by \cite[Lemma~3.6]{DF14}.

    The sum of the error term over $a_{12},a_{23}$ is
    \begin{align*}
         &\ll\frac{W|B_{12}|}{b_{12}} \left(\frac{W|B_{23}|}{|a_4|b_{23}} + O(1)\right) \ll \frac{W^2|B_{12}B_{23}|}{|a_4|} = \frac{|a_1|}{|B_{34}|}\cdot \frac{W^2B}{|a_1a_2a_3a_4|}\\
         &\ll \frac{W^2B}{T_2^{1/3}|a_1a_2a_3a_4|},
    \end{align*}
    using $b_{ij} \ge 1$ and \eqref{eq:ai_bound_WB_2} to drop the $O(1)$ in the first step, and \eqref{eq:ai_bound_TB_2} in the final step.

    For the summation of the main term over $a_{23}$, we combine \eqref{eq:sum_congruence} with partial summation as in \cite[Lemma~3.1]{D09} (where the number of monotonous pieces is uniformly bounded by \cite[Lemma~3.6]{DF14}) to obtain
    \begin{equation*}
        \int_{(a_{12},a_{23},a_{34}) \in S^{(W)}} \frac{\ddd a_{34} \ddd a_{23}}{|a_1a_4|b_{23}b_{34}}  + O\left(\sup_{|a_{23}| \le W|B_{23}|} \int_{(a_{12},a_{23},a_{34}) \in S^{(W)}} \frac{\ddd a_{34}}{|a_1|b_{34}}\right).
    \end{equation*}
    By \eqref{eq:restriction_aij}, the integral over $a_{34}$ in the error term is $\ll W|B_{34}|/(|a_1|b_{34})$ independently of $a_{23}$. Hence (using \eqref{eq:ai_bound_TB_2} again) the sum of this error term over $a_{23}$ is
    \begin{equation*}
        \ll \frac{W|B_{12}|}{b_{12}} \cdot \frac{W|B_{34}|}{|a_1|b_{34}} \ll \frac{W^2|B_{12}B_{34}|}{|a_1|} = \frac{|a_4|}{|B_{23}|}\cdot \frac{W^2B}{|a_1a_2a_3a_4|} \ll \frac{W^2B}{T_2^{1/3}|a_1a_2a_3a_4|}.
    \end{equation*}

    Similarly, summing the main term over $a_{12}$ leads to
    \begin{equation*}
        \int_{S^{(W)}} \frac{\ddd a_{34} \ddd a_{23} \ddd a_{12}}{|a_1a_4|b_{12}b_{23}b_{34}}  + O\left(\sup_{|a_{12}| \le W|B_{12}|} \int_{(a_{12},a_{23},a_{34}) \in S^{(W)}} \frac{\ddd a_{34}\ddd a_{23}}{|a_1a_4|b_{23}b_{34}}\right),
    \end{equation*}
    where the error term (with the integral independent of $a_{12}$) is
    \begin{equation*}
        \ll \frac{W^2|B_{23}B_{34}|}{|a_1a_4|b_{23}b_{34}} \ll \frac{W^2B}{T_2^{1/3}|a_1a_2a_3a_4|}.\qedhere
    \end{equation*}
\end{proof}

Let
\begin{equation}\label{eq:def_theta}
    \theta(\ab',T_1):= \sums{\db: \eqref{eq:db},\ |d_i| \le T_1\\\eb: \eqref{eq:eb},\ |e_i| \le T_1} 
        \frac{\mu(\db,\eb)}{b_{12}b_{23}b_{34}}.
\end{equation}

\begin{prop}\label{prop:summary}
    We have
    \begin{equation*}
        |\Mover(B)| = \sums{\ab' \in \ZZnz^4\\\eqref{eq:ai_bound_TB}} \frac{\theta_0(\ab')\theta(\ab',T_1) \vol S^{(W)}(\ab',B)}{|a_1a_4|} + O(B(\log B)^3\log \log B).
    \end{equation*}
\end{prop}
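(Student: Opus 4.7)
The plan is to substitute the lattice point count of Proposition~\ref{prop:lattice_point_counting} into the expression from Proposition~\ref{prop:restrict_moebius}, and then bookkeep the resulting error against the main term.

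First, since the outer sum in Proposition~\ref{prop:restrict_moebius} carries a factor $\theta_0(\ab')$, I may restrict attention to $\ab' \in \ZZnz^4$ with $\theta_0(\ab')=1$ and \eqref{eq:ai_bound_TB}. For any such $\ab'$, Proposition~\ref{prop:lattice_point_counting} applies uniformly for all $(\db,\eb)$ with \eqref{eq:db}, \eqref{eq:eb} and $|d_i|,|e_i|\le T_1$. Its main term, $\vol S^{(W)}(\ab',B)/(|a_1a_4|b_{12}b_{23}b_{34})$, depends on $(\db,\eb)$ only through the factor $1/(b_{12}b_{23}b_{34})$, so multiplying by $\mu(\db,\eb)$ and summing produces exactly $\theta_0(\ab')\theta(\ab',T_1)\vol S^{(W)}(\ab',B)/|a_1a_4|$ by definition \eqref{eq:def_theta}.

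For the error term, I bound $|\mu(\db,\eb)|\le 1$ and crudely count at most $T_1^8$ admissible Möbius tuples, so each $\ab'$ contributes at most $T_1^8 W^2 B/(T_2^{1/3}|a_1a_2a_3a_4|)$. Lemma~\ref{lem:bound_a'_B} ensures $|a_i|\le B$ under \eqref{eq:ai_bound_TB}, hence summing the harmonic factor $1/|a_1a_2a_3a_4|$ over $\ab' \in \ZZnz^4$ with $|a_i|\le B$ gives $O((\log B)^4)$. With $W=\log B$, $T_1=(\log B)^{2^{22}}$ and $T_2=(\log B)^{2^{27}}$, the total error is of order
\begin{equation*}
B(\log B)^{2^{25}+6-2^{27}/3},
\end{equation*}
and since $2^{27}/3-2^{25}=2^{25}/3\gg 6$, this is $o(B)$, hence comfortably absorbed by the $O(B(\log B)^3\log\log B)$ term already present in Proposition~\ref{prop:restrict_moebius}.

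The only genuine obstacle is verifying this polylog budget; it works precisely because the exponents in $T_1$ and $T_2$ were chosen so that $8\cdot 2^{22}$, arising from the trivial count of Möbius tuples, is much smaller than $2^{27}/3$, arising from the saving $T_2^{1/3}$ in Proposition~\ref{prop:lattice_point_counting}. Beyond that accounting, the two propositions combine by direct substitution.
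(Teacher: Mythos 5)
Your proof is correct and follows the same route as the paper: substitute Proposition~\ref{prop:lattice_point_counting} into Proposition~\ref{prop:restrict_moebius}, recognize the main term as $\theta_0(\ab')\theta(\ab',T_1)\vol S^{(W)}(\ab',B)/|a_1a_4|$ via \eqref{eq:def_theta}, and bound the error by $T_1^8 W^2 B(\log B)^4/T_2^{1/3}$ using the crude count of Möbius tuples and Lemma~\ref{lem:bound_a'_B}. The only difference is cosmetic: you make explicit the polylog bookkeeping ($2^{25}+6-2^{27}/3<0$), which the paper leaves to the reader with the remark that the choice of $T_1,T_2,W$ suffices.
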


\begin{proof}
    Plugging Proposition~\ref{prop:lattice_point_counting} into Proposition~\ref{prop:restrict_moebius} gives the main term as stated and the additional error term (using $|\theta_0(\ab')| \le 1$ and $|\mu(\db,\eb)|\le 1$)
    \begin{equation*}
        \ll \sums{\ab' \in \ZZnz^4\\\eqref{eq:ai_bound_TB}} \sums{\db: \eqref{eq:db},\ |d_i| \le T_1\\\eb: \eqref{eq:eb},\ |e_i| \le T_1} 
        \frac{W^2B}{T_2^{1/3}|a_1\dots a_4|} \ll \frac{T_1^8W^2 B(\log B)^4}{T_2^{1/3}},
    \end{equation*}
    which is not larger than the previous error term by our choice of $T_1,T_2,W$.
\end{proof}

\subsection{The local densities}

The next step is to compute the local densities. Here, we first need to remove the truncation for $a_{ij}$ arising from Proposition~\ref{prop:restrict_W} for the real density and the one on the Möbius variables arising from Proposition~\ref{prop:restrict_moebius} for the $p$-adic densities.

\begin{lemma}\label{lem:real_density}
    For $\ab' \in \ZZnz^4$, we have
    \begin{equation*}
        \vol S^{(W)}(\ab',B) = (1+ O(W^{-3}))\cdot\frac{2\omega_\infty(X)}{3}\cdot\frac{B}{|a_2a_3|},
    \end{equation*}
    with $\omega_\infty(X)=2\pi^2$.
\end{lemma}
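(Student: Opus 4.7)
The plan is to rescale via $z_{ij}:=a_{ij}/B_{ij}$, which makes the region $\ab'$-independent, and then identify the resulting volume with the archimedean density.

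The Jacobian of $(a_{12},a_{23},a_{34}) \mapsto (z_{12},z_{23},z_{34})$ equals $|B_{12}B_{23}B_{34}| = B/|a_2a_3|$ directly from \eqref{eq:def_Bij}. As already observed in the proof of Proposition~\ref{prop:restrict_W}, the height conditions \eqref{eq:height_on_torsor} transform into $|z_{ij}z_{jk}z_{kl}| \le 1$ (since $|a_jB_{ij}\cdot a_kB_{jk}\cdot B_{kl}|=B$), the truncation \eqref{eq:restriction_aij} into $|z_{ij}| \le W$, and the torsor relations from Lemma~\ref{lem:dependent_aij} into universal linear identities $\pm z_{ij} \pm z_{ik} \pm z_{il} = 0$ (the signs of the $a_i$ cancel out). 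Thus
\begin{equation*}
\vol S^{(W)}(\ab',B) = \frac{B}{|a_2a_3|}\,V_z(W),
\end{equation*}
where $V_z(W)$ is independent of $\ab'$, and it remains to prove $V_z(W) = (1+O(W^{-3}))\cdot \frac{2\omega_\infty(X)}{3}$.

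To evaluate $V_z(\infty)$, I apply the further linear substitution $\yb := (z_{23},\, z_{23}-z_{34},\, z_{12})$, of determinant $\pm 1$; up to rescaling $\yb \mapsto \yb/R$ with $R=(B|a_1a_2a_3a_4|)^{1/3}$, this inverts the parameterization of Proposition~\ref{prop:parameterization}. A direct inspection shows that the twelve torsor monomials $a_{ij}a_ja_{jk}a_ka_{kl}$ biject, up to signs, with the elements of $\Ps$ evaluated at $\yb$; for instance $z_{12}z_{23}z_{34}=y_3y_1(y_1-y_2)$, $z_{13}z_{34}z_{24}=y_2(y_1-y_2)(y_1-y_3)$, and $z_{14}z_{24}z_{23}=y_1(y_1-y_3)(y_2-y_3)$. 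Hence the conditions $|z_{ij}z_{jk}z_{kl}| \le 1$ become $\max_{P \in \Ps}|P(\yb)| \le 1$, yielding $V_z(\infty) = \frac{2\omega_\infty(X)}{3}$ by the definition of $\omega_\infty(X)$.

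For the truncation error $V_z(\infty)-V_z(W)$, I repeat the dyadic argument from Proposition~\ref{prop:restrict_W} in $z$-space: on $\max|z_{ij}| \asymp M > W$, a linear torsor relation forces a second $z_{ij}$ to be $\asymp M$ (say $|z_{13}|,|z_{34}| \asymp M$), and then the height conditions $|z_{12}z_{13}z_{34}|,|z_{13}z_{34}z_{24}|,|z_{14}z_{23}z_{13}| \le 1$ (with $z_{14}=z_{13}-z_{12}$) force $|z_{12}|,|z_{24}|,|z_{23}| \ll M^{-2}$. This dyadic piece has volume $\ll M^{-3}$, and summing over dyadic $M \ge W$ gives $O(W^{-3})$. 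The main technical point is the polynomial identification in the middle step — matching the twelve torsor monomials against $\pm\Ps$ — which is concrete but routine; the other steps simply repackage material already established in the paper.
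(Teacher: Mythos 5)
Your proof is correct and follows essentially the same route as the paper: rescale to $z_{ij}=a_{ij}/B_{ij}$ with Jacobian $B/|a_2a_3|$, note the height and torsor conditions become $\ab'$-independent, change variables to $\yb=(z_{23},\,z_{23}-z_{34},\,z_{12})$ to match the defining volume of $\omega_\infty(X)$, and control the truncation error with the dyadic argument from Proposition~\ref{prop:restrict_W}. The only substantive difference is one of exposition: you carry out the identification of the twelve rescaled torsor monomials $z_{ij}z_{jk}z_{kl}$ with $\pm P(\yb)$ for $P\in\Ps$ (giving three worked examples), whereas the paper simply asserts that the substitution "shows that the volume is $2\omega_\infty(X)/3$" and refers to \cite[\S 1.3]{Bre02}; you also derive the smallness of $z_{23}$ directly from a third height condition $|z_{13}z_{14}z_{23}|\le 1$ (after using a torsor relation to show $|z_{14}|\asymp M$), where the paper argues via the torsor equations — both routes are valid and equivalent in strength.
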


\begin{proof}
    For the first statement, the change of our three coordinates $a_{ij} = B_{ij}z_{ij}$ appearing in $S^{(W)}$ has Jacobian determinant
    \begin{equation*}
        |B_{12}B_{23}B_{34}| = \frac{B}{|a_2a_3|}.
    \end{equation*}
    For the three dependent variables $a_{ij}$ as in \eqref{eq:dependent_aij}, we observe that
    \begin{equation*}
        a_{13} = \frac{a_2B_{23}z_{23}-a_4B_{34}z_{34}}{a_1} = B_{13}(z_{23}-z_{34})
    \end{equation*}
    and analogously
    \begin{equation*}
        a_{24} = B_{24}(z_{23}-z_{12}),\quad a_{14} = B_{14}(z_{23}-z_{34}-z_{12}).
    \end{equation*}
    Therefore, writing $(z_{13},z_{24},z_{14}):=(z_{23}-z_{34},z_{23}-z_{12},z_{23}-z_{34}-z_{12})$, we have $a_{ij}=B_{ij}z_{ij}$ for all six coordinates (turning \eqref{eq:restriction_aij} into $|z_{ij}|\le W$). Hence the anticanonical monomials appearing in our height condition \eqref{eq:height_on_torsor} transform as
    \begin{equation*}
        |a_{ij}a_ja_{jk}a_ka_{kl}| = |a_ja_kB_{ij}B_{jk}B_{kl}z_{ij}z_{jk}z_{kl}| = B|z_{ij}z_{jk}z_{kl}|
    \end{equation*}
    so that \eqref{eq:height_on_torsor} is equivalent to $\max|z_{ij}z_{jk}z_{kl}| \le 1$.
    Hence
    \begin{equation*}
        \vol S^{(W)}(\ab',B) = \frac{B}{|a_2a_3|} \vol\{(z_{12},z_{23},z_{34}) \in \RR^3 : \max|z_{ij}z_{jk}z_{kl}| \le 1,\ |z_{ij}|\le W\}.
    \end{equation*}

    We claim that the volume appearing on the right hand side is
    \begin{equation}\label{eq:volume_with_error}
         \vol\{(z_{12},z_{23},z_{34}) \in \RR^3 : \max|z_{ij}z_{jk}z_{kl}| \le 1\} + O(W^{-3}).
    \end{equation}
    In other words, we need to bound the contribution of $z_{ij}$ where $\max |z_{ij}|>W$. As in the proof of Proposition \ref{prop:restrict_W}, it suffices to consider the case where $|z_{12}|, |z_{23}| \ll 1/W^2$ and $z_{34} \ll W$. But this volume is clearly $O(W^{-3})$.

    Finally, the change of variables $z_{12}=y_3$, $z_{23}=y_1$, $z_{34}=y_1-y_2$ shows that the volume in \eqref{eq:volume_with_error} is $2\omega_\infty(X)/3$. For the computation of the volume in the definition of $\omega_\infty(X)$, we refer to \cite[\S 1.3]{Bre02}.
\end{proof}

\begin{lemma}\label{lem:p-adic_densities}
    For $\ab' \in \ZZnz^4$ with $\theta_0(\ab')=1$, we have
    \begin{equation*}
        \theta(\ab',T_1) = \theta(\ab')+O(T_1^{-1/2}\tau(a_1a_2a_3a_4)),
    \end{equation*}
    where
    \begin{equation*}
        \theta(\ab') = \prod_{p \mid a_1a_2a_3a_4} \left(1-\frac 1 p\right)\left(1-\frac 1{p^2}\right)\prod_{p\nmid a_1a_2a_3a_4} \left(1-\frac 4{p^2}+\frac{3}{p^3}\right).
    \end{equation*}
    Here, the error term contributes $O(T_1^{-1/2}B(\log B)^8)$ to $|\Mover(B)|$, which is satisfactory.
\end{lemma}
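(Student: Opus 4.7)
The plan is to interpret $\theta(\ab',T_1)$ as a truncation of an absolutely convergent series $\theta(\ab')$ summed over all positive $\db,\eb$ satisfying \eqref{eq:db} and \eqref{eq:eb}, evaluate that series via an Euler product, and control the truncation tail. Writing $b_{12}b_{23}b_{34}=e_1e_2e_3e_4\cdot X(\db)$ with $X(\db)=[d_1;d_2;(d_3;d_4)]\,[d_2;d_3;d_4]\,[d_1;d_3;d_4]$, the full sum factors as an $\eb$-part times a $\db$-part. The $\eb$-part is $\prod_i\sum_{e_i\mid a_i}\mu(e_i)/e_i=\prod_{p\mid a_1a_2a_3a_4}(1-1/p)$, using that the $a_i$ are pairwise coprime by $\theta_0(\ab')=1$.

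For the $\db$-part I compute the local factor at each prime by enumerating $(\delta_1,\delta_2,\delta_3,\delta_4)\in\{0,1\}^4$. At primes $p\nmid a_1a_2a_3a_4$, the trivial tuple gives $1$, each of the four single-index tuples yields $X=p^2$ and contributes $-1/p^2$, and direct inspection shows that the eleven remaining tuples all have $X=p^3$, so the signed count $6-4+1=3$ yields local factor $1-4/p^2+3/p^3$. At a prime $p\mid a_i$, condition \eqref{eq:db} forces $\delta_j=0$ for $j\ne i$, leaving only two admissible tuples and giving local factor $1-1/p^2$, independent of $i$. Combined with the $\eb$-factor, this reproduces the stated formula for $\theta(\ab')$.

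The tail $R:=\theta(\ab')-\theta(\ab',T_1)$ is bounded using $\mathbf{1}_{\max(d_i,e_i)>T_1}\le T_1^{-1/2}\sum_i(d_i^{1/2}+e_i^{1/2})$, giving
\begin{equation*}
|R|\le\frac{1}{T_1^{1/2}}\sum_i\sums{\db,\eb\text{ squarefree}\\\eqref{eq:db},\eqref{eq:eb}}\frac{d_i^{1/2}+e_i^{1/2}}{b_{12}b_{23}b_{34}}.
\end{equation*}
Each inner sum splits into an $\eb$-part bounded by $\prod_{p\mid a_1\cdots a_4}(1+p^{-1/2})\le 2^{\omega(a_1a_2a_3a_4)}\le\tau(a_1a_2a_3a_4)$, and a $\db$-part whose local factors are of the form $1+O(p^{-3/2})$ and hence converge to an absolute constant. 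This yields $|R|\ll T_1^{-1/2}\tau(a_1a_2a_3a_4)$.

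For the error contribution to $|\Mover(B)|$, I plug this into Proposition~\ref{prop:summary}, use $\vol S^{(W)}(\ab',B)\ll B/|a_2a_3|$ from Lemma~\ref{lem:real_density}, and factor $\tau(a_1a_2a_3a_4)=\prod_i\tau(a_i)$ by pairwise coprimality, obtaining
\begin{equation*}
\ll\frac{B}{T_1^{1/2}}\prod_i\sum_{|a_i|\le B}\frac{\tau(a_i)}{|a_i|}\ll\frac{B(\log B)^8}{T_1^{1/2}},
\end{equation*}
which is easily absorbed in the target error since $T_1=(\log B)^{2^{22}}$. The main obstacle is the bookkeeping of the local factor computation for $X(\db)$: the asymmetric form of $b_{12}b_{23}b_{34}$ arising from our choice of $(a_{12},a_{23},a_{34})$ as summation variables hides the symmetry of the final answer, so one must verify directly that the eleven tuples with at least two nonzero $\delta_i$ all satisfy $X(\db)=p^3$, and that the alternating count assembles to the symmetric local factor $1-4/p^2+3/p^3$.
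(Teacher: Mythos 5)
Your proposal is correct and follows essentially the same route as the paper: factor the sum as an $\eb$-part times a $\db$-part, evaluate each as an Euler product (your explicit enumeration of the $16$ local tuples just makes visible what the paper asserts), bound the truncation tail by a Rankin-type trick, and sum the resulting error against $\vol S^{(W)}/|a_1a_4|\ll B/|a_1a_2a_3a_4|$. The only cosmetic difference is that you apply Rankin with a fixed exponent $1/2$ uniformly to both $\db$ and $\eb$ via $\mathbf{1}_{\max(d_i,e_i)>T_1}\le T_1^{-1/2}\sum_i(d_i^{1/2}+e_i^{1/2})$, whereas the paper bounds the $E$-tail directly by $\tau(a)/T_1$ and the $D$-tail by Rankin with a general $c\in(0,1)$; both give the same $T_1^{-1/2}\tau(a_1a_2a_3a_4)$ error.
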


\begin{proof}
    By its definition \eqref{eq:def_theta}, we can write
    \begin{align*} 
    \theta(\ab',T_1)&=\sums{\db: \eqref{eq:db},\ |d_i| \le T_1\\\eb: \eqref{eq:eb},\ |e_i| \le T_1} \frac{\mu(\db,\eb)}{e_1e_2e_3e_4[d_1;d_2;(d_3;d_4)][d_2;d_3;d_4][d_1;d_3;d_4]}\\
    &=D(\ab',T_1) \prod_{i=1}^4 E(a_i,T_1)
    \end{align*}
    with
    \begin{equation*} D(\ab',T_1):=\sum_{\db: \eqref{eq:db},\ |d_i| \le T_1} \frac{\mu(\db)}{[d_1;d_2][d_2;d_3;d_4][d_1;d_3;d_4]}
    \end{equation*}
    and
    \begin{equation*}
        E(a,T_1):=\sum_{e \mid a,\ 1 \le e \le T} \frac{\mu(e)}{e}.
    \end{equation*}
    Writing 
    \begin{equation*}
        E(a):=\sum_{e \mid a} \frac{\mu(e)}{e}=\prod_{p \mid a} \left(1-\frac{1}{p}\right),
    \end{equation*}
    we clearly have $E(a) \in (0,1]$ and 
    \begin{equation*}
        |E(a)-E(a,T_1)| \le \sum_{e \mid a,\ e>T_1} \frac{1}{e} < \frac{\tau(a)}{T_1}.
    \end{equation*}
    Moreover, we note that
    \begin{align*}
        D(\ab')&:=\sum_{\db: \eqref{eq:db}} \frac{\mu(\db)}{[d_1;d_2][d_2;d_3;d_4][d_1;d_3;d_4]}\\&=\prod_{p \mid a_1a_2a_3a_4} \left(1-\frac{1}{p^2}\right) \prod_{p \nmid a_1a_2a_3a_4} \left(1-\frac{4}{p^2}+\frac{3}{p^3}\right),
    \end{align*}
    where we obtain the Euler product using multiplicativity and checking which $d_i$ can be divisible by $p$ (depending on whether $p$ divides one of the $a_i$ or not).

    Finally, using symmetry and then applying Rankin's trick, we have
    \begin{align*}
        |D(\ab')-D(\ab',T_1)| &\ll \sum_{\db: d_1>T_1} \frac{\mu(\db)^2}{[d_1;d_2][d_2;d_3;d_4][d_1;d_3;d_4]}\\
        &\le \frac{1}{T_1^c}\sum_{\db} \frac{\mu(\db)^2 d_1^c}{[d_1;d_2][d_2;d_3;d_4][d_1;d_3;d_4]}\\
        &\ll \frac{1}{T_1^c} \prod_p \left(1+O(p^{c-2})\right) \ll \frac{1}{T_1^c}
    \end{align*}
    for any fixed $c \in (0,1)$ by computing the Euler factors as above. Collecting the results, we see that all error terms are bounded by $\tau(a_1a_2a_3a_4)/T_1^{1/2}$ as desired.

    Finally, the total contribution of the error term can be estimated as
    \begin{equation*}
        \ll \frac{1}{T_2^{1/2}}\sums{\ab' \in \ZZnz^4\\\eqref{eq:ai_bound_TB}} \frac{B\tau(a_1)\tau(a_2)\tau(a_3)\tau(a_4)}{|a_1a_2a_3a_4|} \ll \frac{B(\log B)^8}{T_2^{1/2}}.\qedhere
    \end{equation*}
\end{proof}

\subsection{Completion of the proof}

Finally, we perform the summation over the remaining four variables $\ab'$. Here, we are summing a nicely controlled arithmetic function against a non-negative weight function. In principle, this lies in the scope of partial summation, although we would need to take care with powers of $\log B$ arising in the error terms. This situation however is well-understood, and we can use the machinery of \cite[\S 4, \S 7]{D09}; here, the simplified version of \cite[\S 6.4.4]{ADHL} is sufficient.

\begin{lemma}\label{lem:sum_ai_alpha}
    We have
    \begin{equation*}
        \sums{\ab' \in \ZZnz^4\\\eqref{eq:ai_bound_TB}} \frac{\theta_0(\ab')\theta(\ab')B}{|a_1a_2a_3a_4|} = 2^4\cdot\frac{ 3\alpha(X)}{5} \theta_1 B(\log B)^4 + O(B(\log B)^3 \log \log B),
    \end{equation*}
    where
    \begin{equation*}
        \theta_1 = \prod_p \left(1-\frac 1 p\right)^5\left(1+\frac 5 p+\frac 1{p^2}\right).
    \end{equation*}
\end{lemma}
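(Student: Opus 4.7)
I would recognize the sum as a polytope summation of a jointly multiplicative arithmetic function and apply the analytic machinery of \cite[§4,§7]{D09}, in the simplified form of \cite[§6.4.4]{ADHL}. First, using the sign symmetry $a_i \leftrightarrow -a_i$, I replace $\ZZnz^4$ by $\ZZ_{>0}^4$ at the cost of a factor of $2^4$. The function $\theta_0(\ab')\theta(\ab')$ is then jointly multiplicative in $(a_1,a_2,a_3,a_4)$: its local factor at a prime $p$ is $\theta_p^{(0)} := 1 - 4/p^2 + 3/p^3$ when $p \nmid a_1a_2a_3a_4$, equals $\theta_p^{(1)} := (1-1/p)(1-1/p^2)$ when $p$ divides exactly one $a_i$, and vanishes when $p$ divides two or more $a_i$. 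Setting $x := 1/p$, the associated Dirichlet series factors as
\[
Z(s_1,\ldots,s_4) = \prod_i \zeta(s_i) \cdot H(s_1,\ldots,s_4), \qquad H_p = \theta_p^{(0)}\prod_i(1-p^{-s_i}) + \theta_p^{(1)}\sum_i p^{-s_i}\prod_{j\ne i}(1-p^{-s_j}).
\]
At $(1,\ldots,1)$ this reduces to $(1-x)^4[1+4x-4x^2-x^3]$; the identity $1+4x-4x^2-x^3 = (1-x)(1+5x+x^2)$ then gives $H_p(1,\ldots,1) = (1-x)^5(1+5x+x^2)$, so that $H(1,\ldots,1) = \theta_1$, and $H$ extends holomorphically in a polydisc around $(1,\ldots,1)$.

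Next I would invoke the summation formula from \cite[§6.4.4]{ADHL} applied to the polytope region $R(B) := \{\ab' \in \RR_{\ge 1}^4 : \eqref{eq:ai_bound_TB}\}$, yielding
\[
\sum_{\ab' \in \ZZ_{>0}^4 \cap R(B)} \frac{\theta_0(\ab')\theta(\ab')}{a_1 a_2 a_3 a_4} = \theta_1 \int_{R(B)}\frac{\ddd a_1\cdots \ddd a_4}{a_1 a_2 a_3 a_4} + O((\log B)^3\log\log B).
\]
The change of variables $u_l := \log a_l / \log(T_2^{-1}B)$ converts the integral to $(\log(T_2^{-1}B))^4 \cdot V$, where $V$ is the Euclidean volume of the polytope
\[
P := \{(u_1,\ldots,u_4) \in \RR^4_{\ge 0} : 2(u_i+u_j+u_k) - u_l \le 1 \text{ for each permutation of } \{1,2,3,4\}\}.
\]

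Finally I compute $V$ directly by splitting the polytope according to the value of $s := u_1+\cdots+u_4$. For $s \le 1/2$, the inequality constraints are automatic and the region is a $4$-simplex of volume $1/384$. For $s \in (1/2, 4/5]$ (beyond which $P$ is empty), each inequality becomes $u_l \ge (2s-1)/3$; the substitution $w_l := u_l - (2s-1)/3 \ge 0$ produces a $3$-simplex on each slice with $\sum_l w_l = (4-5s)/3$, and integrating in $s$ gives $\frac{1}{162}\int_{1/2}^{4/5}(4-5s)^3 \ddd s = 1/640$. Hence $V = 1/384 + 1/640 = 1/240 = 3\alpha(X)/5$. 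Combined with $(\log(T_2^{-1}B))^4 = (\log B)^4 + O((\log B)^3\log\log B)$ (since $\log T_2 \ll \log\log B$), this yields the asserted asymptotic. The main delicacy is the application of the analytic tool: the required holomorphicity and vertical-strip growth of $H$ follow easily from its Euler product, but careful tracking is needed to see that the $\log\log B$ factor in the error term arises precisely from the boundary of the truncated polytope at scale $T_2$.
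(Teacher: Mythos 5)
Your proof is correct and follows essentially the same route as the paper: factor out $2^4$ by sign symmetry, apply the summation machinery of \cite[\S 6.4.4]{ADHL} to separate the arithmetic factor $\theta_1$ from the logarithmic-polytope integral, and then compute the polytope volume $V$. The paper simply cites \cite[Proposition~6.4.4.5]{ADHL} for the Euler product and \cite[(3.25)]{Bre02} for the volume, whereas you carry out both computations explicitly; your Euler factor computation
\[
(1-x)^4\bigl(1+4x-4x^2-x^3\bigr)=(1-x)^5(1+5x+x^2)
\]
and your slicing calculation $V=\tfrac{1}{384}+\tfrac{1}{640}=\tfrac{1}{240}$ are both correct. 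Note, in fact, that your explicit computation exposes a typo in the paper: the paper asserts $V_1=\tfrac{1}{180}$, but $\tfrac{3\alpha(X)}{5}=\tfrac{3}{720}=\tfrac{1}{240}$ with $\alpha(X)=\tfrac{1}{144}$, which is the value you obtain and the one actually needed for the theorem to be consistent with Proposition~\ref{prop:parameterization} and Lemma~\ref{lem:symmetry}. You correctly identify that the $\log\log B$ in the error term comes from the boundary truncation at scale $T_2$ (via $\log T_2\ll\log\log B$), not from the Tauberian step itself. The only place one should be slightly more careful than your sketch is in invoking the hypotheses of \cite[Proposition~6.4.4.7]{ADHL} for the truncated indicator (monotonicity/piecewise structure), which the paper handles implicitly in the same way; this is a matter of citation, not substance.
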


\begin{proof}
    Restricting the summation over $\ab'$ to $\ZZ_{>0}^4$ gives a factor $2^4$ by symmetry. We apply \cite[Proposition~6.4.4.7]{ADHL} to the volume function that is the product of $B/(a_1a_2a_3a_4)$ and the indicator function of all $a_1,\dots,a_4 \ge 1$ satisfying \eqref{eq:ai_bound_TB}, and the arithmetic function $\theta_0(\ab')\theta(\ab') \in \Theta_4'(4)$ as in \cite[Definition~6.4.4.4]{ADHL}. The result is
    \begin{equation*}
        2^4 \theta_1 V_0(B)+O(B(\log B)^3 \log \log B),
    \end{equation*}
    where the arithmetic factor $\theta_1$ as above is obtained by a repeated application of \cite[Proposition~6.4.4.5]{ADHL}, and
    \begin{equation*}
        V_0(B):=\int_{\substack{a_1,a_2,a_3,a_4\ge 1\\\eqref{eq:ai_bound_TB}}} \frac{B}{a_1a_2a_3a_4} \ddd a_1 \ddd a_2 \ddd a_3 \ddd a_4.
    \end{equation*}
    Substituting $a_i = (B/T_2)^{x_i}$ and using $\log T_2 \ll \log \log B$, we obtain
    \begin{equation*}
        V_0(B) = V_1 \cdot B(\log(B/T_2))^4 = V_1 B(\log B)^4 + O(B(\log B)^3\log \log B),
    \end{equation*}
    where
    \begin{equation*}
        V_1:=\vol\{(x_1,x_2,x_3,x_4) \in \RR_{\ge 0}^4 : 2x_i+2x_j+2x_k-x_l\le 1\} = \frac{1}{180} = \frac{3\alpha(X)}{5}
    \end{equation*}
    by \cite[(3.25)]{Bre02}.
\end{proof}

\begin{proof}[Proof of the Theorem]
    We plug Lemma~\ref{lem:real_density} and Lemma~\ref{lem:p-adic_densities} into Proposition~\ref{prop:summary} to obtain
    \begin{equation*}
         |\Mover(B)| =  (1+ O(W^{-3}))\cdot\frac{2\omega_\infty(X)}{3} \sums{\ab' \in \ZZnz^4\\\eqref{eq:ai_bound_TB}} \frac{\theta_0(\ab')\theta(\ab')B}{|a_1a_2a_3a_4|} + O(B(\log B)^3\log \log B).
    \end{equation*}
    Applying Lemma~\ref{lem:sum_ai_alpha} gives
    \begin{equation*}
        |\Mover(B)| = \frac{2^5\alpha(X)}{5}\theta_1\omega_\infty(X)B(\log B)^4+O(B(\log B)^3 \log\log B)
    \end{equation*}
    since $O(W^{-3}B(\log B)^4)$ is smaller than our error term.
    
    It is not hard to check that we obtain the same bound for $|\Munder(B)|$, and hence plugging this into Lemma~\ref{lem:symmetry} and Proposition~\ref{prop:parameterization} gives the result.    
\end{proof}

\bibliographystyle{amsalpha}

\bibliography{main}

\end{document}